\newtheorem{theorem}{Theorem}
\newtheorem{definition}{Definition}
\newtheorem{corollary}[theorem]{Corollary}
\newtheorem{proposition}[theorem]{Proposition}
\theoremstyle{definition}
\newtheorem{remark}[theorem]{Remark}
\def \mb{\mathbb}
\def \bf{\mathbf}
\def \R{\mb R}                 
\def \C{\mb C}                 
\def \a{\alpha}         
\def \b{\beta}           
\def \D{\Delta}         
\def \vp{\varphi}       
\def \th{\theta}       
\newcommand {\diag} {\text{diag}}
\def \S{\mb S}        
\def\v{{\bf v}}
\def\u{{\bf u}}
\def\m{{\bf m}}
\def\0{{\bf 0}}
\newcommand {\q} {\mathbf{q}}
\newcommand {\p} {\mathbf{p}}
\def \and{\mbox{and}}
\title{ Regular polygonal equilibrium configurations on $\S^1$ and stability of the associated relative equilibria  }
\begin{document}
\maketitle
\markboth{}{}
\author{\begin{center}
					{Xiang Yu$^1$  and Shuqiang Zhu$^2,$}\\
			\ \ \ \ \ \ \ \ \ \ \ \ 	\ \ \ \ \ \ \ \ \ \ \ \ 	\ \ \ \ \ \ \ \ \ \ \ \ 	\ \ \ \ \ \ \ \ \ \ \ \ 		\ \ \ \ \ \ \ \ \ \ \ \ 	\emph{In memoriam  of Florin Diacu}

		\bigskip
		$^1$School of Economic and Mathematics, Southwestern
		University of Finance and Economics, Chengdu 611130, P.R. China \\
		$^2$School of Mathematical Sciences, University of Science and Technology of China, Hefei 230026,   P.R. China\\
		yuxiang@swufe.edu.cn, zhus@ustc.edu.cn\\
	\end{center}

 \begin{abstract}
 		For the curved $n$-body problem in $\S^3$, we show that a regular polygonal configuration for $n$ masses on a geodesic  is an equilibrium configuration if and only if  $n$ is odd and the masses are equal. The equilibrium configuration is associated with 
 		a one-parameter family  (depending on the angular  velocity) of  relative equilibria, which take place on $\S^1$ embedded in $\S^2$. We then study the stability of the associated  relative equilibria on two invariant manifolds, $T^*((\S^1)^n\setminus\D)$ and  $T^*((\S^2)^n\setminus\D)$.  We show that they are Lyapunov stable on $\S^1$,  they  are
 		Lyapunov stable  on $\S^2$  if the absolute value of  angular  velocity is larger  than a certain value, and that they are linearly unstable on $\S^2$ if the absolute value of  angular  velocity is smaller   than that certain value. 
\end{abstract}
\vspace{2mm}

\textbf{Key Words:}  curved $n$-body problem; equilibrium configurations; regular polygonal configurations; Lyapunov  stability;  Jacobi coordinates. 
\vspace{8mm}

\section{introduction}\label{sec:intro}

The curved $n$-body problem studies the motion of particles interacting under the cotangent potential  in three-dimensional sphere and three-dimensional hyperbolic sphere. It is a natural extension of the Newtonian $n$-body problem.  It roots in the research of Bolyai and Lobachevsky. There are many researches in this area over the past two decades on the Kepler problem, two-body problem, relative equilibria, stability of periodic orbits,  etc.   For history and  recent advances,  one can refer to  Arnold et al. \cite{AKN06},  Borisov et al. \cite{BMK04} and Diacu et al \cite{DPS12-1, Dia13-1}.

In classical mechanics, a particle is in \emph{mechanical equilibrium configuration} if the net force on that particle is zero.  An \emph{equilibrium configuration} is a configuration for which all particles are in mechanical equilibrium. Equilibrium configurations do not exist in the Newtonian  $n$-body problem. However, they do exist in the curved   $n$-body problem in $\S^3$. They are critical points of the potential.  They lead to the equilibrium solutions, as well as families of relative equilibria. 
  
The purpose of this paper is to study regular polygonal equilibrium configurations in $\S^3$  and the stability of the associated relative equilibria. We show that a regular polygonal configuration for $n$ masses on the equator (denoted by $\S^1$ ) of a 2-dimensional great sphere (denoted by $\S^2$ ) is an equilibrium configuration if and only if  $n$ is odd and  the masses are equal.  Each of the equilibrium configurations leads to a   one-parameter family  (depending on the angular  velocity) of  relative equilibria on $\S^1$. Both $T^*((\S^1)^n\setminus\D)$ and  $T^*((\S^2)^n\setminus\D)$ are invariant manifolds of the Hamiltonian system. We  show that the family of relative equilibria  are  Lyapunov stable on $\S^1$, they are 
Lyapunov stable  on $\S^2$  if the absolute value of  angular  velocity is larger  than a certain value, and that they are linearly unstable on $\S^2$ if the absolute value of  angular  velocity is smaller   than that certain value.


In the Newtonian $n$-body problem, relative equilibria are related to  the   planar central configurations. The symmetry of central configurations always associates with  the symmetry of the  masses. 
 For example, Perko-Walter \cite{PW85} shows that  the regular $n$-gon is a central configuration if and only if all masses are equal.  The  regular $n$-gon central configurations always lead to linearly unstable relative equilibria (cf. Moeckel \cite{Moe95}, Roberts \cite{Rob99-1}).

In the curved $n$-body problem in $\S^3$, the stability of regular polygonal relative equilibria was first studied by Mart\'{i}nez-Sim\'{o} \cite{MS13}. 
They consider  relative equilibria of three equal masses moving on upper half of  $\S^2$ embedded in $\R^3$.  The masses are moving on a circle $x^2 +y^2 =r^2, r\in (0,1)$ and form 
an equilateral triangle viewed from the ambient space $\R^3$, \cite{DP11}. The angular velocity is determined by $r$, see Remark \ref{rem:bif}.  They find that the linear stability depends on the angular velocity. 
The stability of three-body relative equilibria on the equator  of $\S^2$ was studied by  Diacu-S\'{a}nchez Cerritos-Zhu \cite{DSZ16}. They 
find that relative equilibria of three masses (not necessarily equal) on the equator  are Lyapunov stable on the equator, Lyapunov stable on $\S^2$ if the absolute value of the  angular velocity is larger than a certain value. On the other hand, Stoica \cite{Sto18} investigated  the general  $n$-body problem on surface of revolution. For the equal masses case, if the potential is attractive,  she finds that regular polygonal relative equilibria on geodesic circles are  unstable if the angular momentum is smaller than a certain value, and are  stable otherwise within a four-dimensional invariant manifold. She also finds that there is typically a pitchfork bifurcation. Our work can be viewed as an extension of her stability result in the four-dimensional invariant manifold to the full phase space  for the case of gravitational $n$-body  problem on $\S^2$.

 \section{The curved $n$-body problem in $\S^3$ and  main results  }\label{sec:main}

 In this section,  we review  the curved $n$-body problem in $\S^3$,  discuss the equilibrium configurations and
 state the main results of this paper. Vectors are all column vectors, but written as row vectors in the  text.

 \subsection{The curved $n$-body problem in $\S^3$ and equilibrium configurations}

The  curved  $n$-body  problem in the three-dimensional sphere  studies  the motion of $n$ particles interacting  under the so-called cotangent potential.  There are researches in which the problem was set  up   with other  models  of  the three-dimensional sphere. Following Diacu  \cite{Dia13-1},  we use the unit  sphere in $\R^4$. That is,  
 $\S^3=\{ (x,y,z,w)\in \R^4| x^2+y^2+z^2+w^2=1  \}$.   The metric on $\S^3$ is  induced from the standard metric of $\R^4$.  The  distance   between two   point $\q_i$ and $\q_j$,    $d_{ij}$,  is   computed by
$\cos  d_{ij}= \q_i \cdot \q_j$, where $\cdot $ is the scalar product in $\R^4$.

The curved $n$-body problem in $\S^3$ is a Hamiltonian system   in $(\R^4)^n$ with  holonomic constraints. The Hamiltonian is 
\[   H= \sum_{i=1}^n \frac{||\p_i||^2}{2m_i} -U(\q),    \] 
where $\q=(\q_1, ..., \q_n)$, $\q_i\in \R^4$, $\p_i=m_i \dot{\q}_i$,  $U(\q)$ is the potential defined by $U=\sum m_im_j \cot d_{ij}$, and the constraints are $\q_i \cdot \q_i=1, i=1, ..., n$.  The potential implies that 
the   singularity set of the configuration space is  $\D=\cup_{1\le i<j\le n}\{\q\in (\S^3)^n\ \! | \!  \q_i=\pm \q_j\}$. 
The  equations of motion  are (cf.\cite{Dia13-1, DSZ17})
\begin{equation}\label{equ:main}
\begin{cases}
\dot {\q}_i =\p_i/m_i \cr 
\dot\p_i=\sum_{j=1,j\ne i}^n\frac{m_im_j [\q_j-\cos d_{ij}\q_i]}{\sin^3 d_{ij}}-(\p_i\cdot \p_i)\q_i/m_i\cr
\q_i\cdot \q_i=1, \ \ \ \ i=1,..., n.
\end{cases}
\end{equation}

\begin{definition}
	A configuration $\q\in (\S^3)^n\setminus \D$ is called an  equilibrium configuration if it is a critical point of the potential, i.e., $	\nabla_{\q_i}U(\q)=\0,\  i=1,...,n,$
	where  $\nabla U (\q)$ means the gradient of $U$. 
That is,  $\q$ is  an equilibrium configuration if 
	\begin{equation} \label{equ:F}
 \nabla_{\q_i} U (\q)=\sum_{j=1, j\ne i}^n \frac{m_im_j [\q_j-\cos d_{ij}\q_i]}{\sin^3 d_{ij}}=\0,\  i=1,...,n. 
	\end{equation}
\end{definition}
Those configurations are first introduced  by Diacu \cite{Dia13-1} in the name \emph{fixed-points}. Then they are called \emph{special central configurations} by Diacu-Stoica-Zhu \cite{DSZ17}. Since the name ``central configuration''  does not  suit them very well, namely, they do not lead to total collision motion as in the Newtonian $n$-body problem, we call them 
\emph{ equilibrium configurations}.  
Note that the set of  equilibrium configurations have $O(4)$ symmetry. The system \eqref{equ:F} can be written in another  equivalent form. 
\begin{proposition}\label{prop:SCCE1} 
	An $n$-body  configuration $\q$ in  $\S^3$ is an equilibrium configuration  if   there are $n$ real  constants $\lambda_1, ..., \lambda_n$ such that
	\begin{equation} \label{equ:SCCE1}
	\sum _{j\ne i, j=1}^n \frac{m_j\q_j}{\sin^3 d_{ij}}- \lambda_i\q_i=0,  \ i=1, ..., n. 
	\end{equation}
\end{proposition}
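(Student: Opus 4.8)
The plan is to show that \eqref{equ:SCCE1} and \eqref{equ:F} are merely two ways of writing the same condition, the bridge between them being the sphere constraint $\q_i\cdot\q_i=1$ together with the defining identity $\q_i\cdot\q_j=\cos d_{ij}$. Since the passage is called an \emph{equivalent} reformulation, I would establish both implications, although only the converse is needed to prove the statement as written.

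For the forward direction, assuming \eqref{equ:F} holds, I would divide by $m_i$ (the masses being positive), split the sum into its $\q_j$-part and its $\q_i$-part, and collect the scalar coefficient multiplying $\q_i$. This produces \eqref{equ:SCCE1} with the explicit value
\[
\lambda_i=\sum_{j\ne i, j=1}^n \frac{m_j\cos d_{ij}}{\sin^3 d_{ij}}.
\]
For the converse, which is the assertion of the proposition, suppose \eqref{equ:SCCE1} holds for some real constants $\lambda_i$. The decisive step is to take the Euclidean inner product of \eqref{equ:SCCE1} with $\q_i$: using $\q_i\cdot\q_i=1$ on the right and $\q_i\cdot\q_j=\cos d_{ij}$ on the left, this single scalar equation forces
\[
\lambda_i=\sum_{j\ne i, j=1}^n \frac{m_j\cos d_{ij}}{\sin^3 d_{ij}}.
\]
Substituting this value back into \eqref{equ:SCCE1} and multiplying through by $m_i$ reproduces \eqref{equ:F} exactly, so $\q$ is an equilibrium configuration.

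The computation is short and I do not anticipate any serious obstacle. The one point deserving emphasis is conceptual rather than technical: the constants $\lambda_i$ in \eqref{equ:SCCE1} are not genuine free parameters. Dotting with $\q_i$ and invoking $\|\q_i\|=1$ pins each $\lambda_i$ down uniquely, so \eqref{equ:SCCE1} is not a strictly weaker requirement than \eqref{equ:F} but in fact coincides with it. Recognizing that the sphere constraint removes the apparent extra freedom is what makes the two systems equivalent, and it is the only step where the geometry of $\S^3$ enters.
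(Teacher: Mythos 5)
Your proposal is correct and follows essentially the same route as the paper's own proof: dotting \eqref{equ:SCCE1} with $\q_i$ and using $\q_i\cdot\q_i=1$, $\q_i\cdot\q_j=\cos d_{ij}$ to pin down $\lambda_i=\sum_{j\ne i}\frac{m_j\cos d_{ij}}{\sin^3 d_{ij}}$, whence the two systems coincide. Your added remark that the $\lambda_i$ are not free parameters is exactly the point the paper's one-line computation is making.
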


\begin{proof}
	Suppose that system \eqref{equ:SCCE1} hold. Multiplying the $i$-th one  with $\q_i$, we obtain 
	\[   \lambda_i = \lambda_i \q_i \cdot \q_i = \sum _{j\ne i, j=1}^n \frac{m_j\q_j\cdot \q_i}{\sin^3 d_{ij}}=   \sum _{j\ne i, j=1}^n \frac{m_j\cos d_{ij}}{\sin^3 d_{ij}}.   \]
	Thus, system \eqref{equ:SCCE1} are equivalent to  system \eqref{equ:F}. 
	\end{proof}

The equilibrium configurations obviously lead to equilibrium solutions $\q(t)=\q(0), t\in \R$. They also lead to other simple motions.  Relative equilibrium is a phase curve  that is at the same time a one-parameter orbit of the action of the symmetry  group of the system.   
For the curved $n$-body problem in $\S^3$, with the unite sphere model, the symmetry group is $O(4)$. 
Each of one-parameter subgroups  of $O(4)$  is a  conjugate to the following subgroup
\[ A_{\alpha, \beta}(t)= \begin{bmatrix}
\cos\alpha t & -\sin \alpha t&0 &0\\
\sin \alpha t&\cos\alpha t&0&0\\
0&0&\cos\beta t & -\sin \beta t\\
0&0&\sin \beta t&\cos\beta t
\end{bmatrix}, \a, \b \in \R,   \]
which justifies the following definition. 

\begin{definition}
	For the curved $n$-body problem in $\S^3$,  a   solution in the form of $A_{\alpha, \beta}(t)\q(0)$ is called  a \emph{relative equilibrium}. 
\end{definition}



\begin{proposition}[\cite{DSZ17}]\label{cor:re&cc}
	For any  equilibrium configuration  $\q$,  there is a one-parameter family of relative equilibria associated to it, namely, $A_{\a, \a}(t)\q$ for any $\a\in \R$. Further more, if the equilibrium configuration  lies  on the union of the two great circles, $x^2+y^2=1,$ and $ z^2+w^2=1$, then there is a two-parameter family of relative equilibria associated to it, namely, $A_{\a, \b}(t)\q$ for any $\a, \b \in \R$.
\end{proposition}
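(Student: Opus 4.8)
The plan is to substitute the candidate curve $\q_i(t)=A_{\a,\b}(t)\,\q_i(0)$ into the equations of motion \eqref{equ:main} and verify that it is a genuine solution; since any curve of this shape is, by definition, a relative equilibrium, this is all that is required. Write $A(t)=A_{\a,\b}(t)$ and let $B=\dot A(0)$ be its infinitesimal generator, so that $A(t)=\exp(tB)$ and therefore $\dot A(t)=BA(t)=A(t)B$ (each $2\times 2$ block of $B$ commutes with the corresponding block of $A(t)$). Explicitly,
\[
B=\begin{bmatrix}
0 & -\a & 0 & 0\\
\a & 0 & 0 & 0\\
0 & 0 & 0 & -\b\\
0 & 0 & \b & 0
\end{bmatrix}.
\]

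First I would record what orthogonality buys us. Since $A(t)\in SO(4)$ preserves the scalar product of $\R^4$, we get $\q_i(t)\cdot\q_j(t)=\q_i(0)\cdot\q_j(0)$ for all $t$; in particular the constraints $\q_i(t)\cdot\q_i(t)=1$ persist and every mutual distance $d_{ij}$ is frozen in time. Differentiating the candidate curve then gives $\p_i(t)=m_iA(t)B\q_i(0)$ and $\dot\p_i(t)=m_iA(t)B^2\q_i(0)$, while orthogonality yields the constant $\p_i\cdot\p_i=m_i^2\,|B\q_i(0)|^2$. The decisive point is that the force term vanishes identically along the curve: because the distances are constant and $A(t)$ factors out of the linear sum,
\[
\sum_{j\ne i}\frac{m_im_j\,[\q_j(t)-\cos d_{ij}\,\q_i(t)]}{\sin^3 d_{ij}}
=A(t)\,\nabla_{\q_i}U(\q(0))=\0,
\]
the last equality being precisely the equilibrium condition \eqref{equ:F}. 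After applying $A(t)^{-1}$ and cancelling $m_i$, the second equation of \eqref{equ:main} therefore collapses to the single algebraic identity
\[
B^2\q_i(0)=-|B\q_i(0)|^2\,\q_i(0),\qquad i=1,\dots,n.
\]

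It then remains to verify this eigen-relation in the two cases. When $\a=\b$ one has $B=\a\J$ with $\J^2=-I$, so $B^2=-\a^2 I$ and $|B\q_i(0)|^2=\a^2$, and the identity holds for \emph{every} unit vector $\q_i(0)$; this gives the first, unconditional assertion. For general $\a,\b$, the $2$-planes $\{z=w=0\}$ and $\{x=y=0\}$ are the eigenspaces of $B^2$ with eigenvalues $-\a^2$ and $-\b^2$. Hence if $\q_i(0)$ lies on the circle $x^2+y^2=1$ then $B^2\q_i(0)=-\a^2\q_i(0)=-|B\q_i(0)|^2\q_i(0)$, and symmetrically if it lies on $z^2+w^2=1$; in either case the identity holds, which yields the second assertion.

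The only genuinely substantive step is the vanishing of the force term—recognizing that $A(t)$ can be pulled through the homogeneous, distance-dependent sum, so that the bracketed gradient is evaluated at the critical point $\q(0)$—after which the whole dynamical equation reduces to the eigen-relation. To explain why the union of the two great circles is the exact hypothesis, I would add the remark that for $\a\ne\b$ the eigenvalues $-\a^2\ne-\b^2$ are distinct, so a unit vector obeys $B^2\q=-|B\q|^2\q$ if and only if it lies in one of the two invariant planes; no wider class of equilibrium configurations can support the full two-parameter family.
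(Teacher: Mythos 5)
Your verification is correct and is exactly the standard argument (the paper itself gives no proof, deferring to \cite{DSZ17}, where the result is established by the same direct substitution): the rotation pulls through the frozen-distance force sum to give $A(t)\nabla_{\q_i}U(\q(0))=\0$, and the dynamics reduce to the eigen-relation $B^2\q_i(0)=-|B\q_i(0)|^2\q_i(0)$, which holds for all unit vectors when $\a=\b$ and on the two invariant circles in general. The only nitpick is in your closing (optional) necessity remark: the eigenvalues $-\a^2,-\b^2$ coincide when $\a=\pm\b$, not just $\a=\b$, so the correct condition there is $\a\ne\pm\b$; this does not affect the proof of the stated proposition.
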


There are actually more relative equilibria related to one equilibrium configurations $\q$.  Let $\tau\in O(4)$. Then obviously,  $\tau \q=(\tau \q_1, ..., \tau \q_n)$ is also an equilibrium configuration.  Thus, $A_{\a, \a}(t)\tau \q$ is a relative equilibrium for any $\a \in \R$.  Thus, there is a  $7$-parameter family of relative equilibria related to $\q$.  Suppose that $\q$ is an equilibrium configuration on the great circle  $x^2+y^2=1$. Then, $A_{\a, \b}(t)\q$ is a relative equilibrium  for any $\a, \b \in \R$, which  is equivalent to  $A_{\a, 0}(t)\q$ for  $\a \in \R$. Also,  $A_{\a, \a}(t)\tau (\q)$ is a relative equilibrium  for any $\a \in \R$ and any  $\tau \in O(4)$.

\subsection{Main results}

Denote by  $\S^1$ and $\S^2$ the specified great circle $\{ (x,y,z,w)\in \R^4|   x^2+y^2 =1, z=w=0  \}$ and the specified great two-sphere  $\{ (x,y,z,w)\in \R^4|   x^2+y^2+z^2 =1, w=0  \}$ respectively. Then $\S^1$ is the equator of $\S^2$ embedded in $\R^3=\{ (x,y,z)\}$.  In this paper, we focus on the regular polygonal equilibrium configurations on a great circle and the associated relative equilibria. 
 By symmetry, we assume that the equilibrium configurations are on $\S^1$.    The questions we are going to discuss are
\begin{enumerate}
	\item Let $\bar \q$ be a regular polygonal configuration (viewed from $\R^3$) on $\S^1$. To form an equilibrium configuration,  is it necessary that the masses are equal?
	\item The associated relative equilibria $ \q(t)=A_{\a, 0}(t)\bar \q$ take place  on $\S^1$, the equator of $\S^2$.  By the equations of motion \eqref{equ:main},   both  $T^*((\S^1)^n\setminus\D)$ and  $T^*((\S^2)^n\setminus\D)$ are invariant manifolds of the Hamiltonian system.  	Consider the relative equilibria on $T^*((\S^1)^n\setminus\D)$, are they  stable? 	Consider the relative equilibria on $T^*((\S^2)^n\setminus\D)$, are they  stable? 
\end{enumerate}

 We only study the case that the number of vertices  of the regular  polygon is odd. If the number is even, then there are pair of particles opposite to each other, i.e., $\q_i=-\q_j$. Then the configuration belongs to $\D$.  
 We  use the spherical coordinate system $(\vp, \th)$ for $\S^2$, $ \vp\in [0, 2\pi), \th \in [0, \pi)$. Recall that the Cartesian coordinates and the spherical coordinates are related by $(x,y,z)=(\sin \th \cos\vp, \sin\th \sin \vp, \cos \th)$.  Then,   $(\S^2)^n$ is  parametrized by $(\vp_1, ..., \vp_n, \th_1, ..., \th_n)$, and $\S^1$ is  parametrized by $( \vp, \frac{\pi}{2})$. 



 \begin{proposition}
 An $n$-body ($n$ is  not necessarily odd)  configuration on $\S^1$ is an  equilibrium configuration  if
 \begin{equation} \label{equ:SCCE2}
 0= \sum_{j\ne i, j=1}^n \frac{m_j\sin (\varphi_j-\varphi_i) }{\sin ^3 d_{ij}}, \ i=1, ..., n. 
 \end{equation}

 \end{proposition}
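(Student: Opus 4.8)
The plan is to compute the gradient \eqref{equ:F} directly for a configuration lying on $\S^1$ and show that it collapses to a scalar multiple of the unit tangent vector to $\S^1$, the scalar being exactly the sum in \eqref{equ:SCCE2}. Since $\q$ lies on $\S^1$, each $\q_i$ is the $\th_i=\tfrac{\pi}{2}$ case of the spherical parametrization, namely $\q_i=(\cos\vp_i,\sin\vp_i,0,0)\in\R^4$. First I would record that $\cos d_{ij}=\q_i\cdot\q_j=\cos(\vp_j-\vp_i)$, so that $d_{ij}$, and hence $\sin^3 d_{ij}$, depends only on the angular gap $\vp_j-\vp_i$.

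The key step is to simplify the single summand $\q_j-\cos d_{ij}\,\q_i$ appearing in \eqref{equ:F}. Expanding $\cos\vp_j$ and $\sin\vp_j$ by the angle-addition formulas around $\vp_i$ and using $\cos d_{ij}=\cos(\vp_j-\vp_i)$, the first two coordinates reduce to $-\sin\vp_i\sin(\vp_j-\vp_i)$ and $\cos\vp_i\sin(\vp_j-\vp_i)$, while the third and fourth coordinates vanish identically. Thus
\[
\q_j-\cos d_{ij}\,\q_i=\sin(\vp_j-\vp_i)\,(-\sin\vp_i,\cos\vp_i,0,0).
\]
Two features are worth isolating: the vanishing of the $z,w$ components confirms that the gradient stays in the plane of $\S^1$ (consistent with $T^*((\S^1)^n\setminus\D)$ being invariant), and the surviving vector $\mathbf{t}_i:=(-\sin\vp_i,\cos\vp_i,0,0)$ is the unit tangent to $\S^1$ at $\q_i$ and crucially does not depend on $j$.

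Finally I would factor $\mathbf{t}_i$ out of the sum in \eqref{equ:F}, giving
\[
\nabla_{\q_i}U(\q)=m_i\,\mathbf{t}_i\sum_{j\ne i,\,j=1}^n\frac{m_j\sin(\vp_j-\vp_i)}{\sin^3 d_{ij}}.
\]
Since $\mathbf{t}_i$ is a unit vector and $m_i\neq 0$, the right-hand side is $\0$ precisely when the scalar factor vanishes, which is exactly condition \eqref{equ:SCCE2}; in particular \eqref{equ:SCCE2} implies $\nabla_{\q_i}U(\q)=\0$ for every $i$, so $\q$ is an equilibrium configuration. I do not anticipate a genuine obstacle here: the argument is a direct trigonometric reduction, and the only point deserving care is checking that the normal ($z,w$) components of each summand vanish, so that the gradient along $\S^1$ agrees with the full ambient gradient in \eqref{equ:F}. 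Note that the computation in fact yields an equivalence, slightly stronger than the stated sufficiency.
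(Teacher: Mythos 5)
Your proof is correct and is essentially the paper's argument in real-vector rather than complex notation: the paper identifies $\S^1$ with the unit circle in $\C$, divides the condition of Proposition \ref{prop:SCCE1} by $e^{\sqrt{-1}\vp_i}$, and observes that the imaginary part (your component along $\mathbf{t}_i$) must vanish, which is exactly \eqref{equ:SCCE2}. Your direct expansion of $\q_j-\cos d_{ij}\,\q_i$ checks out, and the observation that the statement is in fact an equivalence is also implicit in the paper's proof.
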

 \begin{proof}
 	We  identify $\S^1$ as the unit circle of the complex plane, i.e., $(x,y)= e^{\sqrt{-1} \vp}$.  Then,  a configuration on $\S^1$ is given by $\q=(e^{\sqrt{-1} \vp_1}, ...,  e^{\sqrt{-1} \vp_n})$, $  \ 0\le\varphi_1< \varphi_2<\cdots<\varphi_n\le 2\pi, \vp_1+2\pi \ne \vp_n. $
 The system  \eqref{equ:SCCE1} reads
 $	\sum _{j\ne i, j=1}^n \frac{m_je^{\sqrt{-1} \vp_j}}{\sin^3 d_{ij}}=\lambda_i e^{\sqrt{-1} \vp_i}, i=1, ..., n. $  They lead to  $	\sum _{j\ne i, j=1}^n \frac{m_je^{\sqrt{-1} (\vp_j-\vp_i)}}{\sin^3 d_{ij}}=\lambda_i \in \R, i=1, ..., n$.  That is,   the imaginary part of the left hand side of each equation  is zero. This completes the proof.  
 	\end{proof}


Now let  $n$ be  an odd number  greater  than $1$. Consider the regular $n$-gon configuration on  $\S^1$ given by  $\bar \q=( \frac{2\pi}{n}, ..., \frac{2n\pi}{n}, \frac{\pi}{2}, ..., \frac{\pi}{2})$. 

 \begin{theorem}\label{thm:masses}
 Let $n$ be  an odd number  greater  than $1$,  the regular $n$-gon configuration on $S^1$ is an equilibrium configuration  if and only if $m_1=m_2=...=m_n$
 \end{theorem}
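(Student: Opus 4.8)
The plan is to convert the equilibrium conditions \eqref{equ:SCCE2} for the regular $n$-gon into a single homogeneous linear system in the masses and then to analyze its kernel. First I would substitute $\vp_i = \frac{2\pi i}{n}$ and use that on $\S^1$ one has $\cos d_{ij} = \cos(\vp_i-\vp_j)$, so that $\sin d_{ij} = |\sin(\vp_i-\vp_j)|$ and hence $\sin^3 d_{ij} = |\sin(\vp_i-\vp_j)|^3$. Writing $k = (j-i) \bmod n$, the coefficient of $m_j$ in the $i$-th equation depends only on $k$, namely it equals $f_k := \frac{\sin(2\pi k/n)}{|\sin(2\pi k/n)|^3}$ (both numerator and denominator being $2\pi$-periodic). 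Thus \eqref{equ:SCCE2} becomes $\sum_{k=1}^{n-1} f_k\, m_{i+k} = 0$ for $i = 1,\dots,n$ (indices mod $n$), i.e. $M\m = \0$, where $\m = (m_1,\dots,m_n)$ and $M$ is the circulant matrix with symbol $(f_k)$ and zero diagonal.

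Next I would record the antisymmetry $f_{n-k} = -f_k$, which holds because $\sin(\cdot)$ is odd while $\sin^2 d_{ij}$ is symmetric in $i,j$. Two consequences follow at once: $M$ is a real skew-symmetric (circulant) matrix, and $\sum_{k=1}^{n-1} f_k = 0$, since the terms cancel in pairs $k \leftrightarrow n-k$ and, $n$ being odd, no term is left unpaired. The second fact says precisely that the all-ones vector $\mathbf{1} = (1,\dots,1)$ lies in $\ker M$; this already proves the ``if'' direction, since it means the equal-mass regular $n$-gon satisfies every equation in \eqref{equ:SCCE2}.

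For the ``only if'' direction I would diagonalize the circulant $M$ by the discrete Fourier transform: its eigenvalues are $\lambda_\ell = \sum_{k=1}^{n-1} f_k \omega^{k\ell}$ with $\omega = e^{2\pi\sqrt{-1}/n}$, and the eigenvectors are the Fourier modes. Folding the sum with $f_{n-k} = -f_k$ gives $\lambda_\ell = 2\sqrt{-1}\,S_\ell$, where $S_\ell = \sum_{k=1}^{(n-1)/2} \frac{\sin(2\pi k\ell/n)}{\sin^2(2\pi k/n)}$. Since $\lambda_0 = 0$ corresponds to the mode $\mathbf{1}$, the kernel of $M$ equals $\mathrm{span}(\mathbf{1})$ (equivalently $M$ has rank $n-1$) if and only if $\lambda_\ell \neq 0$ for all $\ell = 1,\dots,n-1$, i.e. if and only if $S_\ell \neq 0$. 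Granting this, every solution of $M\m=\0$ is a scalar multiple of $\mathbf{1}$, so $m_1 = \cdots = m_n$, which is the ``only if'' direction.

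The main obstacle is therefore the nonvanishing $S_\ell \neq 0$ for $1 \le \ell \le \frac{n-1}{2}$ (the remaining $\ell$ follow from $S_{n-\ell} = -S_\ell$). The case $\ell = 1$ is trivial, since $S_1 = \sum_{k=1}^{(n-1)/2} \frac{1}{\sin(2\pi k/n)} > 0$, but for larger $\ell$ the summands change sign and the cancellation must be controlled. I would establish nonvanishing either by proving the sign pattern $(-1)^{\ell-1} S_\ell > 0$, which the cases $n = 3,5,7$ confirm, or by extracting closed forms from the recursion $S_{\ell+1} - S_{\ell-1} = 2\sum_{k=1}^{(n-1)/2}\frac{\cos(2\pi k\ell/n)}{\sin(2\pi k/n)}$, together with a further first difference that collapses the cosine sum down to the elementary sum $\sum_{k=1}^{(n-1)/2}\sin(2\pi k\ell/n)$, which equals $\tfrac12\cot(\tfrac{\pi\ell}{2n})$ or $-\tfrac12\tan(\tfrac{\pi\ell}{2n})$ according as $\ell$ is odd or even. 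Carrying out this trigonometric bookkeeping uniformly in $n$ and $\ell$ is the technical heart of the proof.
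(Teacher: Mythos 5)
Your reduction is exactly the paper's: the same skew--symmetric circulant system $B\m=\0$ built from \eqref{equ:SCCE2}, the same observation that skew--symmetry forces the row sums to vanish so that $(1,\dots,1)\in\ker B$ (the ``if'' direction), and the same Fourier diagonalization showing that $m_1=\cdots=m_n$ is forced precisely when the purely imaginary eigenvalues $2\sqrt{-1}\,S_\ell$ are nonzero for $\ell=1,\dots,n-1$ (your $S_\ell$ is the paper's $\tfrac12\Gamma_{\ell+1}$). Your telescoping identities for $S_{\ell+1}-S_{\ell-1}$ and for the collapsed sine sum are also the ones the paper uses, and your conjectured sign pattern $(-1)^{\ell-1}S_\ell>0$ is in fact what the paper ends up proving.

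The genuine gap is at exactly the point you call ``the technical heart'': the nonvanishing $S_\ell\neq0$ is never proved. Route (a) is a conjecture checked for $n=3,5,7$ with no mechanism for a general argument, and route (b) stops after producing closed forms without explaining how a closed form for a \emph{second difference} yields nonvanishing of the terms themselves. The missing ideas are three. First, one must pass to the step--two subsequence of fixed parity: the second difference $S_{\ell+2}-2S_\ell+S_{\ell-2}=-4\sum_{j=1}^{p}\sin(\ell j\phi)$ has, by \eqref{equ:trig}, the closed form $-2\cot\frac{\ell\pi}{2n}<0$ only for \emph{odd} $\ell$ (for even $\ell$ the sign flips), so it is the subsequence $S_1,S_3,\dots,S_{2p-1}$ --- a complete set of representatives modulo $S_{n-\ell}=-S_\ell$, unlike your proposed range $\ell\le(n-1)/2$ --- that is concave; the consecutive second difference $S_{\ell+1}-2S_\ell+S_{\ell-1}$ has no such sign. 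Second, one needs both endpoints of that subsequence to be positive: $S_1>0$ is immediate, but $S_{2p-1}=-S_2=-2\sum_{j=1}^{p}\cot(j\phi)>0$ is a separate nontrivial inequality, which the paper proves by pairing $j$ with $p+1-j$ in \eqref{equ:1}. Third, one invokes the elementary fact that a finite concave sequence with positive endpoints is positive throughout, hence never zero. Without these steps the ``only if'' direction is not established.
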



\begin{proposition}\label{prop:main}
Let $n=2p+1, p\ge 1$. Let  $\bar \q$ be the   regular $n$-gon equilibrium configuration on $\S^1$ with masses $m_1=...=m_n=1$. In the coordinate system $(\vp_1, ..., \vp_n, \th_1, ..., \th_n)$,  the  Hessian matrix of $U$  at $\bar \q$ is  block diagonal, i.e., 
\[   D^2 U(\bar \q) = \diag\{   \frac{\partial^2 U}{\partial \vp_i\partial\vp_j}(\bar \q),  \frac{\partial^2 U}{\partial \th_i\partial\th_j} (\bar \q)\}.  \] 
The eigenvalues of the first block  $\left[\frac{\partial^2 U}{\partial \vp_i\partial\vp_j} \right]$ consist of one zero, and $n-1$ negative numbers.  The eigenvalues of the second block  $\left[\frac{\partial^2 U}{\partial \th_i\partial\th_j} \right]$ consist of two zeros, and $n-2$ positive numbers. The maximal eigenvalue of the second block  is $ 2\sum_{j=1}^p \frac{1-\cos j\frac{2\pi}{n}}{\sin^3 j\frac{2\pi}{n}}$.
\end{proposition}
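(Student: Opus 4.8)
The plan is to compute $D^2U(\bar\q)$ explicitly in the $(\vp,\th)$ coordinates, use the two symmetries of the equal-mass regular $n$-gon to reduce it to two symmetric circulant matrices, diagonalize these by the discrete Fourier transform, and read off the spectrum. First I would establish the block-diagonal structure from the reflection $R:\th_m\mapsto\pi-\th_m$ (all $m$ at once), i.e. $z\mapsto -z$ in $\R^3$. Since $\cos d_{ij}=\sin\th_i\sin\th_j\cos(\vp_i-\vp_j)+\cos\th_i\cos\th_j$ is $R$-invariant, so is $U$; writing $\psi_m=\th_m-\tfrac{\pi}{2}$, the function $U$ is even in the vector $\psi$, hence each $\partial U/\partial\vp_k$ is even in $\psi$ and so has vanishing $\psi$-gradient at $\psi=\0$. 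This forces $\frac{\partial^2U}{\partial\vp_k\partial\th_l}(\bar\q)=0$, giving the asserted block form.

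Next, writing $U=\sum_{i<j}F(c_{ij})$ with $c_{ij}=\q_i\cdot\q_j$ and $F(c)=c(1-c^2)^{-1/2}$ (so $F'(c)=\sin^{-3}d$ and $F''(c)=3\cos d\,\sin^{-5}d$), I would evaluate the two blocks at $\bar\q$, where $\partial c_{ij}/\partial\th_i=0$ and $c_{ij}=\cos((i-j)\delta)$ with $\delta=2\pi/n$. A short computation yields for the $\vp$-block a weighted graph Laplacian, $\frac{\partial^2U}{\partial\vp_i\partial\vp_j}=-\frac{2\cos d_{ij}}{\sin^3 d_{ij}}$ ($i\ne j$) with zero row sums, and for the $\th$-block $\frac{\partial^2U}{\partial\th_i\partial\th_j}=\frac{1}{\sin^3 d_{ij}}$ ($i\ne j$), $\frac{\partial^2U}{\partial\th_i^2}=-\sum_{j\ne i}\frac{\cos d_{ij}}{\sin^3 d_{ij}}$. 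Because both the configuration and the masses are invariant under the cyclic shift $i\mapsto i+1$, each block is a symmetric circulant matrix, so both are simultaneously diagonalized by the Fourier vectors $v^{(k)}=(\omega^{km})_m$, $\omega=e^{2\pi\sqrt{-1}/n}$.

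Reading off the eigenvalues and pairing $j$ with $n-j$, I would obtain
\[\mu_k=4\sum_{j=1}^{p}\frac{\cos(j\delta)\,(1-\cos(kj\delta))}{\sin^3(j\delta)},\qquad \nu_k=2\sum_{j=1}^{p}\frac{\cos(kj\delta)-\cos(j\delta)}{\sin^3(j\delta)}\]
for the $\vp$- and $\th$-blocks. The symmetry-induced zero modes are immediate: $\mu_0=0$ (rotation about the $z$-axis, the generator of the relative equilibrium), and $\nu_1=\nu_{n-1}=0$ (the two rotations of $\S^2$ that tilt $\S^1$), using $\cos((n-1)j\delta)=\cos(j\delta)$. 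Since $\sin^3(j\delta)>0$ for $1\le j\le p$, the differences $\nu_0-\nu_k=2\sum_{j=1}^p\frac{1-\cos(kj\delta)}{\sin^3(j\delta)}\ge 0$ are nonnegative termwise, which at once shows $\nu_0>0$ and that $\nu_0=2\sum_{j=1}^p\frac{1-\cos(j\delta)}{\sin^3(j\delta)}$ is the maximal eigenvalue of the second block. A product-to-sum identity then yields the clean relation $\mu_k=-(\nu_{k-1}+\nu_{k+1})$ (indices mod $n$), which reduces all remaining sign statements to positivity of the $\nu$'s: granting $\nu_m>0$ for $m\notin\{1,n-1\}$, one gets $\mu_k<0$ for every $k=1,\dots,n-1$, since $\mu_k=0$ would force $\nu_{k-1}=\nu_{k+1}=0$, i.e. $\{k-1,k+1\}=\{1,n-1\}\pmod n$, which forces $k\equiv 0$ or $n\mid 4$ and so is impossible for odd $n$ and $1\le k\le n-1$. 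This gives the one zero and $n-1$ negative eigenvalues of the first block, and together with the two computed zeros, the two zeros and $n-2$ positive eigenvalues of the second.

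The main obstacle is therefore the single inequality $\nu_m>0$ for $2\le m\le n-2$, equivalently $\sum_{j=1}^p a_j\cos(mj\delta)>\sum_{j=1}^p a_j\cos(j\delta)$ with $a_j=\sin^{-3}(j\delta)>0$; that is, $m=1$ (and $m=n-1$) strictly minimizes $\sum_j a_j\cos(mj\delta)$ over $\Z_n$. This is a genuine trigonometric inequality that cannot be seen term by term: the pairwise $2\times2$ contributions $\begin{bmatrix}-\cos d&1\\1&-\cos d\end{bmatrix}$ to the $\th$-block are indefinite, so the positivity must exploit the global circulant structure. I expect to prove it by concentrating on the dominant weights — the $\sin^{-3}(j\delta)$ are largest near $j=p$, where $d\to\pi$ and where the numerator $\cos(mj\delta)-\cos(j\delta)$ vanishes exactly at $m=1$, which is what singles out $m=1$ as the minimizer — and then controlling the remaining terms by a grouping/Abel-summation estimate using the $U$-shaped monotonicity of $j\mapsto\sin^{-3}(j\delta)$ on $1\le j\le p$. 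Essentially all the technical work of the proposition lies in this estimate.
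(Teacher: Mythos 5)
Your reduction is correct as far as it goes, and in two places it is cleaner than what the paper does: the termwise inequality $\nu_0-\nu_k=2\sum_{j=1}^p\frac{1-\cos(kj\delta)}{\sin^3(j\delta)}\ge 0$ gives the maximality of the second block's top eigenvalue in one line, and the identity $\mu_k=-(\nu_{k-1}+\nu_{k+1})$ (which does check out via the product-to-sum formula) elegantly collapses the negativity of the $\vp$-block spectrum into the positivity of the $\th$-block spectrum, whereas the paper runs two parallel arguments for the two blocks. Your block-diagonality argument via the global reflection $\th_m\mapsto\pi-\th_m$ is also fine (the paper reflects one $\th_i$ at a time), and your circulant eigenvalue formulas agree with the paper's $\Phi_{k+1}$ and $\Theta_{k+1}$.

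However, there is a genuine gap exactly where you flag it: the inequality $\nu_m>0$ for $2\le m\le n-2$ is never proved, only announced with a plan ("concentrate on the dominant weights near $j=p$, then control the rest by Abel summation"). That inequality is the entire technical content of the proposition — everything else in your argument and in the paper's is bookkeeping — and the sketched strategy is not obviously executable: the weights $\sin^{-3}(j\delta)$ are large at \emph{both} ends $j=1$ and $j=p$ (only about a factor of $8$ apart asymptotically), the sign pattern of $\cos(mj\delta)-\cos(j\delta)$ in $j$ depends delicately on $m$, and no quantitative comparison is offered. For contrast, the paper proves the equivalent statement by an exact computation: using $\Theta_{n+2-k}=\Theta_k$ it restricts to even indices, computes the third differences $\Theta_{k+6}-3\Theta_{k+4}+3\Theta_{k+2}-\Theta_k$ in closed form via the summation identity $\sum_{j=1}^m\sin 2ja=\sin[a(m+1)]\sin(am)/\sin a$, shows they are negative (so the sequence of consecutive differences $\Theta_{k+2}-\Theta_k$ is concave), and checks that the two endpoint differences $\Theta_4-\Theta_2=-8\sum\cot(j\phi)$ and $\Theta_1-\Theta_{2p}$ are positive; concavity plus positive endpoints forces all differences positive, hence $0=\Theta_2<\Theta_4<\cdots<\Theta_1$. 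Until you supply a proof of $\nu_m>0$ at this level of completeness (the closed-form finite-difference computation is one workable route; your weight-dominance idea would need actual estimates), the proposal does not establish the proposition.
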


The stability of relative equilibria is often defined by the stability of the corresponding 
   equilibria  of the flow on the reduced phase space. 
    The usual practice is to compute the eigenvalues of the unreduced system and then to skip the non-relevant eigenvalues at the end, \cite{MS13,Moe95, Rob99-1}. However, we would like to do the computation in the reduced system.  
Denote by 
\begin{align*}
&\vec{\vp}=(\vp_1, ..., \vp_n),   \     & \vec{p_\vp}=(p_{\vp_1}, ..., p_{\vp_n}),   \\ &\vec{\th}=(\th_1, ..., \th_n),   \  &\vec{p_\th}=(p_{\th_1}, ..., p_{\th_n}). 
\end{align*}

We first do the reduction   for the $n$-body problem on $\S^1$.  In this case,  the potential $U$ depends on $\vec{\vp}$ only and  the Hamiltonian system can be written as 
\[   H(\vec{\vp}, \vec{p_{\vp}})=\sum_{i=1}^n \frac{1}{2m_i} p^2_{\vp_i} -U(\vec{\vp}), \   T^*(\S^1)^n, \ \ \omega =  d(\sum_{i=1}^n\vp_i  dp_{\vp_i}).   \]
Obviously, the group $SO(2)$ has a Hamiltonian action   on the phase space by $(\vec{\vp},\vec{p_{\vp}})\mapsto(\vec{\vp} + \vec{s},\vec{p_{\vp}})$, where $\vec{\vp} + \vec{s}= (\vp_1+s, ..., \vp_n+s)$. 
The corresponding first integral is $J_1(\vec{\vp}, \vec{p_{\vp}})=\sum_{i=1}^n p_{\vp_i}$.  The action of  $SO(2)$ and the integral is analogous to the action of $\R^1$ on $\R^1$ and the corresponding integral, so we use the Jacobi coordinates \cite{MHO09}  of the Newtonian $n$-body problem  to do the reduction. 

Let $\mu_k=\sum_{i=1}^k m_i$, and $\mathtt{M}_k= \frac{m_k \mu_{k-1}}{\mu_k}$, ($\frac{1}{\mathtt{M}_k} = \frac{1}{m_k} +\frac{1}{\mu_{k-1}}$). Denote by 
\[ \vec{u}=(u_2, u_3 ..., u_n, g_n),   \  \   \vec{v}=(v_2, v_3, ..., v_n, G_n).\]
Consider the canonical transformation  from $(\vec{\vp}, \vec{p_\vp})$ to $(\vec{u}, \vec{v})$ given by the generating function $F_2(\vec{v}, \vec{\vp}) = \mathcal A\vec{\vp} \cdot \vec{v}=\mathcal A^T\vec{v} \cdot \vec{\vp},$
where 
\[  \mathcal A= \begin{bmatrix} -1&1&0&0&\dots&0\\
-\frac{m_1}{\mu_2}&-\frac{m_2}{\mu_2}&1&0&\dots&0\\
-\frac{m_1}{\mu_3}&-\frac{m_2}{\mu_3}&-\frac{m_3}{\mu_3}&1&\dots&0\\
\vdots&\vdots&\vdots&\vdots&\ddots&0\\
\frac{m_1}{\mu_n}&\frac{m_2}{\mu_n}&\frac{m_3}{\mu_n}&\frac{m_4}{\mu_n}&\dots&\frac{m_n}{\mu_n}
\end{bmatrix}. \]
The explicit transformation  is 
\begin{equation} \label{equ:jac}
\vec{u}= \frac{\partial F_2}{\partial \vec{v}} = \mathcal A\vec{\vp}, \  \  \vec{p_\vp}= \frac{\partial F_2}{\partial \vec{\vp}} = \mathcal A^T\vec{v}. \  \  
\end{equation}
It is well-known that the Jacobi coordinates system has the following  properties (cf. \cite{MHO09}):  
\begin{equation} \label{equ:kin}
(\vec{p_\vp} )  ^T M \vec{p_\vp}  =  
\vec{v}^T  \mathcal AM \mathcal A^T \vec{v}  =  \vec{v}^T  \tilde M   \vec{v},  \ \ \mu_n g_n = \sum_{i=1}^n m_i \vp_i, \   G_n = \sum_{i=1}^n  p_{\vp_i}, 
\end{equation}
where $M =\diag\{ \frac{1}{m_1}, ..., \frac{1}{m_n} \}, \ \  \tilde M =  \diag\{ \frac{1}{\mathtt{M}_2}, ..., \frac{1}{\mathtt{M}_n}, \frac{1}{\mu_n} \}. $
Note that the potential $U$ does not depend on $g_n$. Suppose $\vec{\vp}$ corresponds to $(u_2, ..., u_n, g_n)$. Then using the transform \eqref{equ:kin}, we see that $\vec{\vp}+\vec{s}$ corresponds to $(u_2, ..., u_n, g_n+s)$. So
\[ \frac{\partial U}{\partial g_n}=\lim_{s\to 0} \frac{U(u_2, ..., u_n, g_n+s)-U(u_2, ..., u_n, g_n) }{s}=\lim_{s\to 0} \frac{U(\vec{\vp} +\vec{s}) -U(\vec{\vp})}{s}=0. 
  \]
 Hence, with the Jacobi coordinates,  the Hamiltonian function can be written as
\begin{equation*} 
\begin{split}
H(\vec{u}, \vec{v}) 
=  \sum_{i=2}^n \frac{v_i^2}{2\mathtt M_i}   + \frac{G_n^2}{2\mu_n}  -U(u_2, ..., u_n).
\end{split}
\end{equation*}

Consider the reduced space $J_1^{-1}(c)/SO(2)$. Obviously, $(u_2, ..., u_n, v_2, ..., v_n)$ can serve as a canonical coordinates system of the symplectic sub-manifold. The reduced Hamiltonian system  is 
\[   H_1=\sum_{i=2}^n \frac{v_i^2}{2\mathtt M_i}    -U(u_2, ..., u_n), \   J_1^{-1}(c)/SO(2), \ \ \omega_1 =  d(\sum _{i=2}^n u_i d v_i),  \]
where we have neglected the constant term. Consider  a relative equilibrium   on $\S^1$ in the $(\vec{\vp}, \vec{p_\vp})$ coordinates system, 
$ \vp_1(t) = \vp_1 + \alpha t,..., \vp_n(t) = \vp_n + \alpha t,  p_{\vp_1}(t)=m_1\alpha, ..., p_{\vp_n}(t)=m_n \alpha.$
It is easy to check that  the motion corresponds to the equilibrium 
\[ \vp_2-\vp_1, ..., \vp_{n} - \frac{\sum_{i=1}^{n-1} m_i \vp_i}{\mu_{n-1}}, 0 ,..., 0,   \]
of the reduced Hamiltonian system.  For the relative equilibrium  $A_{\a, 0}(t)\bar \q$ we are interested, the momentum is $J_1=n\a$, and  the corresponding  equilibrium is 
\[  X_\a = ( \frac{2}{n}\pi,  ...,  \frac{k+1}{n}\pi, ..., \pi,  0, ..., 0 ), \a\in \R.    \]
 \begin{theorem}\label{thm:stas1}
	Let $n=2p+1, p\ge 1$.  Let  $\bar \q$ be the  regular $n$-gon equilibrium configuration on $\S^1$.  Then the   equilibrium $X_a$  of the reduced Hamiltonian system $(H_1, J_1^{-1}(n\a)/SO(2), \omega_1)$ 
	is Lyapunov  stable for all $\a \in \R$.
\end{theorem}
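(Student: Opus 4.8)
The plan is to prove Lyapunov stability by using the reduced Hamiltonian $H_1$ itself as a Lyapunov function, via the classical Dirichlet--Lagrange criterion: an equilibrium of a Hamiltonian system at which the conserved Hamiltonian attains a strict local extremum is Lyapunov stable. Since
$H_1=\sum_{i=2}^n \frac{v_i^2}{2\mathtt M_i}-U(u_2,\dots,u_n)$
is a natural mechanical Hamiltonian, it separates as kinetic-plus-potential, its Hessian at $X_\a$ is block diagonal in $(\vec u,\vec v)$, and I only need to control the sign of each block. First I would check that $X_\a$ is genuinely a critical point of $H_1$. The reduced equations of motion are $\dot u_i=v_i/\mathtt M_i$ and $\dot v_i=\partial U/\partial u_i$, so an equilibrium requires $v_i=0$ and $\partial U/\partial u_i=0$. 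The momentum part of $X_\a$ is $(0,\dots,0)$ by construction, and since $\bar\q$ is an equilibrium configuration we have $\nabla_{\vec\vp}U=\0$; the chain rule $\nabla_{\vec\vp}U=\mathcal A^T\nabla_{\vec u}U$ together with the invertibility of $\mathcal A$ then forces $\partial U/\partial u_i=0$ at the $u$-part of $X_\a$. Hence $X_\a$ is an equilibrium.

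The heart of the argument is to show that $H_1$ has a \emph{strict} local minimum at $X_\a$. The kinetic block is $\diag\{1/\mathtt M_2,\dots,1/\mathtt M_n\}$, which is positive definite, so the $\vec v$-directions are settled at once. For the $\vec u$-directions I must show that the potential block $-[\partial^2 U/\partial u_i\partial u_j]$ is positive definite, i.e. that $[\partial^2 U/\partial u_i\partial u_j]$ is negative definite. Here I invoke Proposition~\ref{prop:main}: the Hessian $D^2_{\vec\vp}U(\bar\q)$ has exactly one zero eigenvalue and $n-1$ negative eigenvalues. The rotational invariance $U(\vec\vp+\vec s)=U(\vec\vp)$ forces the all-ones vector $(1,\dots,1)$ into the kernel of $D^2_{\vec\vp}U$, so this is precisely the single zero direction. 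A direct computation gives $\mathcal A(1,\dots,1)^T=(0,\dots,0,1)^T$, so the rotation direction in $\vec\vp$-space maps exactly onto the center-of-mass coordinate $g_n$ that $U$ does not depend on. Writing $D^2_{\vec\vp}U=\mathcal A^T H_u\,\mathcal A$, where $H_u=\diag\{[\partial^2 U/\partial u_i\partial u_j],\,0\}$ is the block-diagonal matrix whose trailing zero comes from the $g_n$-independence of $U$, the $n-1$ negative eigenvalues on the left transfer under the invertible change $\mathcal A$ to negative-definiteness of the $(u_2,\dots,u_n)$-block. Thus $-U$ has a nondegenerate local minimum in the reduced configuration variables.

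Combining the two blocks, $H_1$ has a nondegenerate strict local minimum at $X_\a$, and since $H_1$ is a first integral of the reduced flow, Dirichlet's theorem yields Lyapunov stability. I note that the conclusion is uniform in $\a$: the reduced Hamiltonian $H_1$ carries no $\a$, and the relative equilibria with different angular velocities all project to the same point $X_\a$ (same $u$-part, $\vec v=0$) in the reduced coordinates, so a single argument covers all $\a\in\R$. The step I expect to require the most care is the linear-algebra bookkeeping: one must verify that the unique zero eigenvalue of the $\vec\vp$-Hessian is exactly consumed by the symmetry direction under $\mathcal A$, so that passing to the Jacobi coordinates loses nothing and the reduced potential Hessian is strictly, not merely weakly, negative definite.
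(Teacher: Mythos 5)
Your proposal is correct and follows essentially the same route as the paper: use $H_1$ as a Lyapunov function via Dirichlet's criterion, note the kinetic block $\diag\{1/\mathtt M_2,\dots,1/\mathtt M_n\}$ is positive definite, and transfer the signature $(n_0,n_+,n_-)=(1,0,n-1)$ of $D^2_{\vec\vp}U(\bar\q)$ from Proposition~\ref{prop:main} through the congruence by $\mathcal A$ to conclude that $[\partial^2 U/\partial u_i\partial u_j]$ is negative definite. You merely make explicit two steps the paper leaves implicit, namely that $\mathcal A(1,\dots,1)^T=(0,\dots,0,1)^T$ so the kernel direction is exactly the $g_n$-direction, and the invocation of Sylvester's law of inertia.
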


We now do the reduction for the $n$-body problem on $\S^2$.  The complete reduction for the  two-dimensional case is not easy, \cite{BMK04}. For our purpose, we will do a partial reduction.  In this case, 
 the  Hamiltonian system is 
\[   H(\vec{\vp}, \vec{\th},  \vec{p_{\vp}}, \vec{p_\th})=\sum_{i=1}^n (\frac{p^2_{\vp_i}}{2m_i\sin^2 \th_i}   +  \frac{p^2_{\th_i}}{2m_i} ) -U(\vec{\vp}, \vec{\th}), \   T^*(\S^2)^n, \ \ \omega =  d(\sum_{i=1}^n\vp_i  dp_{\vp_i}+ \th_i d p_{\th_i}).   \]
Again,   the group $SO(2)$ acts on the phase space by  
$  (\vec{\vp}, \vec{\th},  \vec{p_{\vp}}, \vec{p_\th})\mapsto (\vec{\vp}+\vec{s}, \vec{\th},  \vec{p_{\vp}}, \vec{p_\th}), $
and the corresponding first integral is $J_2(\vec{\vp}, \vec{\th},  \vec{p_{\vp}}, \vec{p_\th})= \sum_{i=1}^n p_{\vp_i}$. We again consider the 
canonical transformation given by \eqref{equ:jac}, i.e., $\vec{u} = \mathcal A \vec{\vp}, \  \vec{p_{\vp}} = \mathcal A^T \vec{v}.$
The relations $\mu_n g_n = \sum_{i=1}^n m_i \vp_i, \   G_n = \sum_{i=1}^n  p_{\vp_i} $  still hold \cite{MHO09}. However, the kinetic energy is more  complicated than the previous case. Let  $S=\diag\{ \frac{1}{\sin \th_1}, ...,  \frac{1}{\sin \th_n} \}. $ Then, the first part of  the kinetic energy 
is $\frac{1}{2}\vec{p_{\vp}} ^T  SMS \vec{p_{\vp}} =\frac{1}{2}\vec{v}^T  \mathcal A  SMS \mathcal A^T  \vec{v}$. Denote by $P$ the matrix $\mathcal A  SMS \mathcal A^T$.  Then, $P$ is not diagonal, and by direct computation we find that  the elements are 
 \begin{equation}  \label{equ:kinetic}\begin{cases} P_{kk}=\frac{1}{\mu^2_k} [  \frac{m_1}{\sin^2 \th_1} +...+ \frac{m_k}{ \sin^2 \th_k} ] +\frac{1}{m_{k+1}\sin^2 \th_{k+1}}, &k \ne n,  \\
 P_{nn}=\frac{1}{\mu^2_n} [  \frac{m_1}{\sin^2 \th_1} +...+ \frac{m_n}{ \sin^2 \th_n} ],   & \   \\
 P_{kl}= \frac{1}{\mu_k\mu_l} [  \frac{m_1}{\sin^2 \th_1} +...+ \frac{m_k}{ \sin^2 \th_k} ] - \frac{1}{\mu_l \sin^2  \th_{k+1}},  & k<l<n,    \\
 P_{kn}=-\frac{1}{\mu_k\mu_n} [  \frac{m_1}{\sin^2 \th_1} +...+ \frac{m_k}{ \sin^2 \th_k} ]  +\frac{1}{\mu_n\sin^2 \th_{k+1}},   & k<n.    \end{cases} 
\end{equation}    
As in the previous case, the potential $U$ does not depend on $g_n$. The reduced space $J_2^{-1}(c)/SO(2)$ can be parametrized by $(u_2, ..., u_n, \vec{\th}, v_2, ..., v_n, \vec{p_\th})$.  The reduced Hamiltonian system  is 
\[   H_2=
 \frac{1}{2}\vec{v}^T P  \vec{v} +  \sum_{i=1}^n \frac{p^2_{\th_i}}{2m_i}   -U(u_2, ..., u_n, \vec{\th}), \   J_2^{-1}(c)/SO(2), \ \ \omega_2 =  d(\sum _{i=2}^n u_i d v_i+ \th_i dp_{\th_i}).   \]
The reduced system might not be useful for a general problem, but it works in our problem.  

For the relative equilibrium  $A_{\a, 0}(t)\bar \q$ we are studying, the momentum is $J_2=n\a=G_n$, and  the corresponding  equilibrium is 
\[  Y_\a = ( \frac{2}{n}\pi, \frac{3}{n}\pi, ...,  \frac{n}{n}\pi, \frac{\pi}{2},...,  \frac{\pi}{2},  0, ..., 0, 0, ..., 0 ), \a\in \R.    \]

\begin{theorem}\label{thm:stas2}
	Let $n=2p+1, p\ge 1$.  Let  $\bar \q$ be the  regular $n$-gon equilibrium configuration on $\S^1$ with masses $m_1=...=m_n=1$.  Then the   equilibrium $Y_a$  of the reduced Hamiltonian system $(H_2, J_2^{-1}(n\a)/SO(2), \omega_2)$ 
	is linearly   unstable if  $\a ^2<  2\sum_{j=1}^p \frac{1-\cos j\frac{2\pi}{n}}{\sin^3 j\frac{2\pi}{n}}$, and  is Lyapunov stable if  $\a ^2>  2\sum_{j=1}^p \frac{1-\cos j\frac{2\pi}{n}}{\sin^3 j\frac{2\pi}{n}}$. 
\end{theorem}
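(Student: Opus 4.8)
The plan is to exhibit $Y_\a$ as a nondegenerate critical point of the reduced Hamiltonian $H_2$ and to read off (in)stability from the signature of the Hessian $D^2H_2(Y_\a)$, after showing that this Hessian decouples into two independent mechanical blocks. Throughout set $c=J_2=n\a$, the value to which $G_n$ is frozen on the level set. First I would check that $Y_\a$ is an equilibrium. Because $\bar\q$ is an equilibrium configuration, $\nabla U(\bar\q)=\0$, so $\partial U/\partial u_i=\partial U/\partial\th_i=0$ at $Y_\a$; and at $\vec\th=(\tfrac\pi2,\dots,\tfrac\pi2)$ one has $P_{kn}=0$ and $\partial_{\th_i}P_{nn}=0$, since by \eqref{equ:kinetic} these entries are built from $\sin^{-2}\th_j$, whose first derivative vanishes at $\tfrac\pi2$. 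Hence $\dot u_i=(P\vec v)_i$ vanishes because $P_{kn}|_{Y_\a}=0$, and $\dot p_{\th_i}=-\partial_{\th_i}(\tfrac12P_{nn}c^2-U)$ vanishes because $\partial_{\th_i}P_{nn}|_{Y_\a}=0$ and $\partial_{\th_i}U(\bar\q)=0$; while $\dot v_i=\partial U/\partial u_i=0$ and $\dot\th_i=p_{\th_i}/m_i=0$.

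Next I would expand $H_2$ to second order in the deviations $(\delta\vec u,\delta\th,\delta\vec v,\delta\vec p_\th)$, where $\delta\th=\vec\th-(\tfrac\pi2,\dots,\tfrac\pi2)$. The claim is that the quadratic part splits with no cross terms as
\[
H_2^{(2)}=H_1^{(2)}(\delta\vec u,\delta\vec v)
+\Big[\tfrac12\sum_{i=1}^n \tfrac{\delta p_{\th_i}^2}{m_i}
+\tfrac12\,\delta\th^{T}\big(\a^2 I-\big[\tfrac{\partial^2U}{\partial\th_i\partial\th_j}\big]\big)\,\delta\th\Big],
\]
where $H_1^{(2)}$ is the quadratic part of the $\S^1$-reduced Hamiltonian $H_1$ at $X_\a$. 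Three facts drive the decoupling: $D^2U$ is block diagonal in $\vec\vp$ and $\vec\th$ by Proposition \ref{prop:main}; the kinetic form carries no $\vec v$--$\vec p_\th$ coupling; and, crucially, the cross term $c\sum_{k<n}P_{kn}(\vec\th)\,\delta v_k$ contributes nothing quadratic because $P_{kn}$ and all its first $\vec\th$-derivatives vanish at $Y_\a$. For the surviving $\th$-potential I would use $\partial^2_{\th_i\th_j}P_{nn}|_{Y_\a}=\tfrac{2}{n^2}\delta_{ij}$, so that the centrifugal contribution $\tfrac14c^2\,\partial^2P_{nn}$ equals $\tfrac{\a^2}{2}I$; and the $\vec v$-block $P^0$, evaluated at $Y_\a$ where $S=I$, is exactly the $\tilde M$-block of the $\S^1$ reduction by \eqref{equ:kin}, which identifies the first summand as $H_1^{(2)}$.

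Finally I would conclude. The block $H_1^{(2)}$ is positive definite by the proof of Theorem \ref{thm:stas1}, so the sign of $H_2^{(2)}$ is governed by the stiffness $\a^2 I-[\partial^2 U/\partial\th_i\partial\th_j]$. By Proposition \ref{prop:main} this matrix has eigenvalues $\a^2$ (with multiplicity two, from the two zero eigenvalues of the $\th$-block) together with $\a^2-\mu_j$ over its $n-2$ positive eigenvalues $\mu_j$, whose maximum is $\Lambda:=2\sum_{j=1}^p\frac{1-\cos j\frac{2\pi}{n}}{\sin^3 j\frac{2\pi}{n}}$. If $\a^2>\Lambda$ the stiffness is positive definite, hence $H_2^{(2)}$ is positive definite; since $H_2$ is conserved and $Y_\a$ is then a strict local minimum, $Y_\a$ is Lyapunov stable. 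If $\a^2<\Lambda$, then (recalling $m_i=1$, so $\ddot{\delta\th}=-(\a^2 I-[\partial^2 U/\partial\th_i\partial\th_j])\delta\th$) the eigenvector belonging to $\Lambda$ gives $\ddot{\delta\th}=(\Lambda-\a^2)\delta\th$ with $\Lambda-\a^2>0$, an exponentially growing mode, so $Y_\a$ is linearly unstable. I expect the main obstacle to be the second-order expansion of the kinetic term $\tfrac12\vec v^{\,T}P(\vec\th)\vec v$: verifying that the $\delta\vec v$--$\delta\th$ coupling drops out and computing $\partial^2P_{nn}$ exactly is what forces the stability threshold to be precisely the top eigenvalue of the $\th$-Hessian, rather than a value shifted by a Schur complement of the coupling.
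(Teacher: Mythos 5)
Your proposal is correct and follows essentially the same route as the paper: both rest on showing that at $Y_\a$ the cross blocks vanish (because $P_{kn}$ and the first $\th$-derivatives of the entries of $P$ vanish at $\vec\th=(\tfrac{\pi}{2},\dots,\tfrac{\pi}{2})$, and $D^2U$ is block diagonal), that $\tfrac{n^2\a^2}{2}\big[\partial^2_{\th_i\th_j}P_{nn}\big]=\a^2 I_n$, and then reading instability from the eigenvalues $\Theta_k-\a^2$ of the $\th$-block and stability from positive definiteness of $D^2H_2(Y_\a)$. The only difference is cosmetic: the paper also records the full Jordan form of the linearization (including the purely imaginary spectrum coming from the $(u,v)$-block via its Proposition~\ref{prop:evalue}), which your argument correctly identifies as unnecessary for the stated conclusion.
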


\begin{remark}
	Though the equilibria $X_\a (Y_\a)$ are  stable in the reduced system, the corresponding relative equilibrium is obviously not stable in the unreduced system, since any relative equilibrium can be perturbed in such a way that the configuration only rotates more quickly.  This is typical for relative equilibria stable in the reduced system, see Patrick \cite{Pat92}. 
\end{remark}
\section{proof of the main results}\label{sec:proof}

To prove  Theorem \ref{thm:masses} and Proposition \ref{prop:main}, we need  the following  property of  \emph{circulant matrices}.
An $n \times  n$ matrix $C = (c_{kj})$  is called circulant if $c_{kj} = c_{k-1, j-1},$  where $c_{0, j}$  and $c_{k, 0}$   are identified with $c_{n, j}$  and $c_{k, n}$, respectively.  For any $n\times n$ circulant matrix $C=(c_{ij})$, the eigenvectors and the corresponding eigenvalues  are 
\begin{equation}\label{equ:circu}
\v _k= (1, \rho_{k-1},\rho_{k-1}^2,\ldots, \rho_{k-1}^{n-1}),  \ \lambda_k = \sum_{j=1}^n c_{1j} \rho_{k-1}^{j-1},  \  k=1, ..., n,  
\end{equation} 
where $\rho_{k}$ is the $k$-th root of unity $\rho_k=e^{\sqrt{-1}  \frac{2k\pi}{n}}$.  
We also need  the identity 
\begin{equation}\label{equ:trig}
   \sum_{j=1}^{m}\sin 2ja= \frac{  \sin [a (m+1)] \sin a m }{\sin a }. 
\end{equation}

Let us introduce some \textbf{notations}. Assume that $n=2p+1, p\ge 1$.  Denote by  $\phi$   the angle $\frac{2\pi}{n}$,    
and by $\v _k$ the $k$-th eigenvector of circulant matrix defined above  for $ k=1, ..., n$. Recall that the regular $n$-gon configuration is given by  $\bar \q=( \frac{2\pi}{n}, ..., \frac{2n\pi}{n}, \frac{\pi}{2}, ..., \frac{\pi}{2} )$ and that  $\rho_k=\cos k \phi + \sqrt{-1} \sin k \phi$.  

\subsection{Proof of Theorem \ref{thm:masses}}
\begin{proof}
	If the masses are equal,  system \eqref{equ:SCCE1}  is  obviously satisfied. Then,  the configuration $\bar \q$ is an equilibrium configuration.   Now we show that  it also necessary. 
	For the regular $n$-gon, $d_{ij}=\min\{ |\vp_i-\vp_j|, 2\pi-|\vp_i-\vp_j|\}$, so system \eqref{equ:SCCE2} is   $\sum_{j\ne k} m_j \frac{\sin (j-k)\phi }{|\sin (j-k) \phi  |^3} =0, \ k=1, ..., n$. 
	It can be written as 
	$ B\m =\0$, where $\m =(m_1, ..., m_n)$ and  $B$ has elements
\[  b_{kj} =  \begin{cases}   \frac{\sin (j-k)\phi }{|\sin (j-k) \phi  |^3}     \  \  & {\rm for }
\ j\ne k,\\
0  \    \ \   \    &{\rm for }
\ j= k.   \end{cases} \]	
Note that $B$ is circulant and skew-symmetric, so its eigenvalues can be computed by formula \eqref{equ:circu} and are purely imaginary. 
Denote  the eigenvalues of $B$ by $\sqrt{-1}\Gamma_1 ..., \sqrt{-1}\Gamma_n.$ Then, $\Gamma_k  = \frac{1}{\sqrt{-1}} \sum_{j=1}^n b_{1j} \sin (k-1)(j-1)\phi $ 
, and 
 \begin{equation} \notag
 \begin{split}
 \Gamma_k  
 &= \sum_{j=1}^{p}\frac{\sin  j \phi }{|\sin j  \phi  |^3} \sin j(k-1) \phi + \sum_{j=p+1}^{2p}\frac{\sin  j \phi }{|\sin j  \phi  |^3} \sin j(k-1) \phi \\
 &= 2  \sum_{j=1}^{p}\frac{\sin j(k-1) \phi }{\sin^2 j  \phi  } , \ k=1, ..., n.
 \end{split}
 \end{equation}

We first show that  $ \Gamma_k\ne 0$ for $k=2, ..., n$. 
Note that $\rho_{k-1}$ is   the complex conjugation of $\rho_{n-(k-1)} =\rho_{n+2-k-1}$ and that  $B$ is  real.  We obtain 
$ -\Gamma_{n+2-k}= {\Gamma}_k. $
Thus,  it is enough to show that none of the the following numbers is zero,
\[  \Gamma_2, \  \Gamma_4, \  \Gamma_6, \ ..., \  \Gamma_{2p}.     \]
We claim that  the sequence  $\{ \Gamma_2, \  \Gamma_4, \  \Gamma_6, \ ..., \  \Gamma_{2p}  \}$  is  concave. By  elementary trigonometric identities and the formula \eqref{equ:trig}, we obtain 
 \begin{align*}
 &\Gamma_{k+2}-\Gamma_k=4\sum_{j=1}^{p}\frac{\cos k  j \phi}{\sin j\phi}, \\
 &\Gamma_{k+4}-2\Gamma_{k+2}+ \Gamma_k= -8\sum_{j=1}^{p}\sin (k+1)  j \phi= -8 \frac{  \sin \frac{(k+1)(p+1)}{2}  \phi  \sin \frac{(k+1)p}{2} \phi }{\sin \frac{k+1}{2} \phi}\\
 &\ \ \ =4 \frac{ \cos \frac{(k+1)(2p+1)  }{2} \phi-\cos \frac{(k+1) }{2}\phi}{\sin \frac{k+1}{2} \phi}\\
 & \ \ \ =4 \frac{ (-1)^{k-1} -\cos \frac{(k+1) }{2}\phi}{\sin \frac{k+1}{2} \phi}.
\end{align*}
If $k$ is even and   $k\in [2, 2p-2]$,  then $\frac{k+1}{2} \phi\in(0, \pi)$ and $\Gamma_{k+4}-2\Gamma_{k+2}+ \Gamma_k<0$. Thus, 
the sequence $\{ \Gamma_2, \  \Gamma_4, \  \Gamma_6, \ ..., \  \Gamma_{2p}  \}$  is concave.

We check that  the two ends of the sequence are positive.  The first one 
$ \Gamma_2=\sum_{j=1}^{p}\frac{1}{\sin j  \phi}$  is obviously positive since $0<j\phi<\pi$ for $1\le j\le p$.   The second one
\[ \Gamma_{2p}= - \sum_{j=1}^{p}\frac{\sin 2j  \phi}{\sin^2j  \phi}  = - 2\sum_{j=1}^{p}\cot  j \phi  =- \sum_{j=1}^{p} [\cot  j \phi + \cot (p+1-j)\phi].   \]
Note that  $	(p+1-j)\phi=\pi -( j \phi -\frac{\phi}{2})$, we obtain 
\begin{equation} \label{equ:1}
\Gamma_{2p}= - 2\sum_{j=1}^{p}\cot  j \phi  =- \sum_{j=1}^{p} [\cot  j \phi - \cot (j-\frac{1}{2})\phi]>0.
\end{equation}
Since that the sequence $\Gamma_2, \  \Gamma_4, \  \Gamma_6, \ ..., \  \Gamma_{2p}$  is concave, and that the two ends are positive, we conclude  that none of  the numbers $\Gamma_2, \  \Gamma_4, \  \Gamma_6, \ ..., \  \Gamma_{2p}$ is zero. Thus, $\Gamma_k\ne 0$ for $k=2, ..., n$. 

Now let us return to show  that it is necessary that  $\m=m_1(1,..., 1)$ to have $B\m=\0$.  Note that  the $n$ eigenvectors of $B$,  $\v _1, ..., \v _n$ form a basis of $\C^n$.  There are $n$ complex constants $\delta_1, ..., \delta_n$ such that $\m =\sum_{i=1}^n \delta_k \v _k$.  So, we have 
\[ \0= B\m =  B  \sum_{i=1}^n  \delta_k \v_k =  \sum_{i=1}^n  \sqrt{-1}\delta_k \Gamma_k \v_k    \Rightarrow \Gamma_k \delta_k =0, \ k=1, ..., n. \]   
 Since $\Gamma_k\ne 0$ for $k=2, ..., n$,  we obtain  $ \delta_k= 0$ for $k=2, ..., n$, i.e.,  $\m=\delta_1 \v_1$. That is, $m_1=m_2=...= m_n$,  a remark that completes the proof.
	\end{proof}	
\begin{remark} \label{rem:Nearequilibrium configurations}
  Consider a configuration of odd bodies on $\S^1$ that is close to a regular polygon.  We can  find masses to form an equilibrium configuration  by solving a linear system $\tilde B \m=0$ 
 equivalent to  system \eqref{equ:SCCE2}.  Since the matrix $\tilde B$ is anti-symmetric and is close the  matrix $B$ in the above proof, we conclude that $\tilde B \m=0$ has a one-dimensional solution space and that the masses can be all positive.  Hence, there is an $(n-1)$-dimensional manifold of equilibrium configurations  in $(\S^1)^n/SO(2)$. 
  This is different from co-circular central configurations of the Newtonian $n$-body problem. For example, it is proved by Cors-Roberts that  the four-body co-circular central configurations  form a two-dimensional manifold \cite{ CR12}.	
\end{remark}

\subsection{Proof of Proposition  \ref{prop:main}} 
 \begin{proof}
 	We first show that the Hessian matrix of $U$ is block diagonal at the equilibrium configuration $\bar \q$, i.e.,  $\frac{\partial ^2 U}{\partial \th_i \partial \vp_j} |_{\bar{\q}}=0$ for all pairs of $(i, j)$.  Denote by $\q +k_i$ the coordinate $( \vp_1,  ..., \vp_n, \th_1, ..., \th_i+k, ...,  \th_n )$, by  $\q +h_j$ the coordinate $(\vp_1,  ..., \vp_j+h, ..., \vp_n, \th_1, ...,  \th_n)$. Then the mutual distances between the particles are the same for the two configurations, $\q +k_i$ and $\q -k_i$ if $\q$ is a configuration on $\S^1$. That is, $U(\q +k_i)= U( \q -k_i)$.  Hence, we obtain 
 	\begin{align*}
 	&\frac{\partial ^2 U}{ \partial \th_i \partial \vp_j  } |_{\bar{\q}}=\lim _{k \to 0} \frac{1}{2k}(  \frac{\partial  U}{\partial \vp_j } |_{\bar{\q}+k_i}    -  \frac{\partial  U}{\partial \vp_j  } |_{\bar{\q} -k_i}    )\\
 	&=\lim _{(h,k)\to (0,0)} \frac{1}{2hk}  (   U(\bar{\q}+k_i+ h_j)  - U(\bar{\q}+k_i) 
 	- U(\bar{\q}-k_i + h_j)  + U(\bar{\q}-k_i)  )\\
 	&=0. 
  	\end{align*}
 We compute the elements of the two blocks of the Hessian matrix of $U$.   	Recall that the masses are $m_1=...=m_n=1$, $U=\sum m_im_j\cot d_{ij}$ and that $\cos d_{ij}= \cos \th_i \cos \th_j+ \sin \th_i \sin \th_j\cos(\vp_i-\vp_j)$. By direct computation, we obtain \cite{DSZ16}, 
\begin{equation}\label{equ:matrix}
	\begin{cases}
&	\frac{\partial U}{\partial \vp_i} = \sum_{j\ne i} \frac{-\sin\th_i \sin \th_j \sin (\vp_i-\vp_j)}{\sin^3 d_{ij}}, \   \frac{\partial U}{\partial \th_i} = \sum_{j\ne i} \frac{-\sin\th_i \cos \th_j + \cos \th_i \sin\th_j \cos (\vp_i-\vp_j)}{\sin^3 d_{ij}},  \\
&\frac{\partial ^2U}{\partial \vp_i \partial \vp_j}= \frac{-3\cos d_{ij} \sin^2\th_i \sin^2 \th_j \sin^2 (\vp_i-\vp_j) + \sin^2 d_{ij}  \sin\th_i \sin \th_j \cos (\vp_i-\vp_j) }{\sin^5 d_{ij}} ,\\
&\frac{\partial ^2U}{\partial \vp_i ^2}= \sum_{j\ne i}\frac{3\cos d_{ij} \sin^2\th_i \sin^2 \th_j \sin^2 (\vp_i-\vp_j) -\sin^2 d_{ij}  \sin\th_i \sin \th_j \cos (\vp_i-\vp_j) }{\sin^5 d_{ij}} ,\\
&\frac{\partial ^2U}{\partial \th_i \partial \th_j}= \frac{3\cos d_{ij} ( -\cos\th_i \sin \th_j+\sin\th_i\cos\th_j \cos (\vp_i-\vp_j) )( -\sin\th_i \cos \th_j+\cos\th_i\sin\th_j \cos (\vp_i-\vp_j) )}{\sin^5 d_{ij}} \\
&\ \   \ \ \ \ \ \ +\frac{ \sin^2 d_{ij} ( \sin\th_i \sin \th_j+\cos\th_i\cos\th_j \cos (\vp_i-\vp_j) )}{\sin^5 d_{ij}} ,\\
&\frac{\partial ^2U}{\partial \th_i ^2}= \sum_{j\ne i}\frac{3\cos d_{ij} ( -\sin\th_i \cos \th_j+\cos\th_i\sin\th_j \cos (\vp_i-\vp_j) )^2-\sin^2 d_{ij} \cos d_{ij}  }{\sin^5 d_{ij}}. 
\end{cases}
\end{equation}	
 Part 1.   The eigenvalues of $\left[\frac{\partial^2 U}{\partial \varphi_i\partial\varphi_j} \right]$ at $\bar \q$.   Note that $\th_i=\frac{\pi}{2}, i=1, ..., n$ and $d_{ij}=\min\{ |\vp_i-\vp_j|, 2\pi-|\vp_i-\vp_j|\}$. 
 By equation \eqref{equ:matrix}, the block  has elements 
\begin{equation} \label{equ:matrixb}
\frac{\partial^2 U}{\partial \varphi_k\partial\varphi_j} =  \begin{cases}   \frac{-2\cos  (j-k)\phi  }{|\sin (j-k)\phi |^3}    \  \  & {\rm for }
\ j\ne k,\\
\sum_{i\ne k}  \frac{2\cos  (i-k)\phi  }{|\sin (i-k)\phi |^3}
\    \ \   \    &{\rm for }
\ j= k,  \end{cases}
\end{equation}
Note that the block is circulant and symmetric. So its eigenvalues, denoted by $\Phi_1, ..., \Phi_n$,   can be computed by formula \eqref{equ:circu} and are real.  Then, 
\begin{equation*}
\begin{split}
\Phi_k&=  \sum_{j= 1}^{2p}\frac{2 \cos j \phi}{|\sin j \phi|^3} -  \sum_{j= 1}^{2p}\frac{2 \cos j \phi}{|\sin j \phi|^3} \cos (k-1)j\phi\\
&= 4  \sum_{j=1}^{p}\frac{\cos j \phi}{\sin^3j  \phi  }(1-\cos (k-1) j\phi) , \ k=1, ..., n. 
\end{split}
\end{equation*}
The first eigenvalue $\Phi_1$ is $0$,  which reflects the $SO(2)$ symmetry of the equilibrium configuration  on $\S^1$.  
Note that $\rho_{k-1}$ is   the complex conjugation of $\rho_{n-(k-1)} =\rho_{n+2-k-1}$ and that $\left[\frac{\partial^2 U}{\partial \varphi_i\partial\varphi_j} \right]$  is  a real matrix. We have 
 $ \Phi_{n+2-k} = \Phi_k. $
 Thus,  it is enough to study just the following eigenvalues
 \[  \Phi_2, \  \Phi_4, \  \Phi_6, \ ..., \  \Phi_{2p}.     \]

Firstly,  we claim that the sequence
$\{\Phi_4-\Phi_2,   \ \Phi_6-\Phi_4, \ ...,\ \Phi_{2p+2}-\Phi_{2p} \}$
 is  concave. By  formula \eqref{equ:trig} and other elementary trigonometric identities, we have
 \begin{align*}
& \Phi_{k+2}-\Phi_k=8\sum_{j=1}^{p}\frac{\sin k  j \phi\cos j \phi}{\sin^2 j\phi}, \  \Phi_{k+4}-2\Phi_{k+2}+ \Phi_k
 =16\sum_{j=1}^{p}\frac{\cos (k+1)  j \phi\cos j \phi}{\sin j\phi}, \\
 &\Phi_{k+6}-3\Phi_{k+4}+ 3\Phi_{k+2} -\Phi_k= -32\sum_{j=1}^{p} \sin (k+2)  j \phi\cos j\phi\\
 & \ \ \ = -16\sum_{j=1}^{p}\left(  \sin (k+3)j\phi   +  \sin (k+1)j\phi   \right)\\
 &  \ \ \ = -16\left(  \frac{\sin \frac{(k+3)(p+1)}{2}\phi \sin \frac{(k+3)p}{2}\phi   }{\sin \frac{k+3}{2}\phi } +\frac{\sin \frac{(k+1)(p+1)}{2}\phi \sin \frac{(k+1)p}{2}\phi  }{\sin \frac{k+1}{2}\phi }       \right)\\
 & \ \ \ = 8\left(  \frac{\cos \frac{(k+3)n}{2}\phi  -\cos \frac{k+3}{2}\phi  }{\sin \frac{k+3}{2}\phi } +\frac{\cos \frac{(k+1)n}{2}\phi -\cos \frac{k+1}{2}\phi  }{\sin \frac{k+1}{2}\phi }       \right)\\
 & \ \ \ =8\left[  \frac{ (-1)^{k+1}-\cos \frac{k+3}{2}\phi }{\sin \frac{(k+3)\pi}{n} }  +   \frac{ (-1)^{k+1}-\cos \frac{k+1}{2}\phi }{\sin \frac{(k+1)\pi}{n} }   \right]
 \end{align*}
 Thus,  	$\Phi_{k+6}-3\Phi_{k+4}+ 3\Phi_{k+2} -\Phi_k<0$  if $k$ is even and    $k\in [2, 2p-4]$, i.e., the sequence $\{\Phi_4-\Phi_2,   \ \Phi_6-\Phi_4, \ ...,\ \Phi_{2p+2}-\Phi_{2p} \}$  is concave.

 Secondly, note that  the two ends of the sequence 
 are positive. Note that  $\Phi_{2p+2}-\Phi_{2p}=\Phi_1-\Phi_3 =-\Phi_3$. Since 
 $ \frac{\Phi_3}{4}=  \sum_{j=1}^{p}\frac{\cos j \phi}{\sin^3 j\phi } (1-\cos 2j\phi) = 2\sum_{j=1}^{p}\cot j\phi, $
which is negative according to  \eqref{equ:1}, we see that  the second end $\Phi_{2p+2}-\Phi_{2p}$ is positive.   
The first end  $\frac{\Phi_4-\Phi_2}{4}$ can be written as 
\[  
\sum_{j=1}^{p}\frac{\cos j \phi }{\sin^3 j\phi } (\cos j\phi-\cos 3j\phi)=  \sum_{j=1}^{p}\frac{\cos j \phi }{\sin^3 j\phi } (4\cos j\phi- 4\cos ^3j\phi)= 4\sum_{j=1}^{p}\frac{\cos^2 j \phi}{\sin j\phi }.   \]
So  the first end $\Phi_4-\Phi_2 $ is also positive.

Hence,  the sequence $\{\Phi_4-\Phi_2,   \ \Phi_6-\Phi_4, \ ...,\ \Phi_{2p+2}-\Phi_{2p} \}$  is positive, which implies
$\Phi_2 <\Phi_4<...<\Phi_{2p} <  \Phi_{2p+2}=\Phi_1=0. $ That is, the eigenvalues of the  block  $\left[\frac{\partial^2 U}{\partial \vp_i\partial\vp_j} \right]$ consist of one zero, and $n-1$ negative numbers.

 Part 2. The eigenvalues of $\left[\frac{\partial^2 U}{\partial \th_i\partial\th_j} \right]$ at $\bar \q$.   Note that $\th_i=\frac{\pi}{2}, i=1, ..., n$ and $d_{ij}=\min\{ |\vp_i-\vp_j|, 2\pi-|\vp_i-\vp_j|\}$. 
 By equation \eqref{equ:matrix}, the block  has elements 
 \[   \frac{\partial^2 U}{\partial \th_k\partial\th_j} =  \begin{cases}   \frac{1 }{|\sin (j-k)\phi |^3}    \  \  & {\rm for }
 \ j\ne k,\\
- \sum_{i\ne k}  \frac{\cos (i-k)\phi }{|\sin (i-k)\phi |^3}
 \    \ \   \    &{\rm for }
 \ j= k,  \end{cases} \]	
 Note that the block is circulant and symmetric. So its eigenvalues, denoted by $\Theta_1, ..., \Theta_n$,   can be computed by formula \eqref{equ:circu} and are real.  Then, 
 \begin{equation} \notag
 \begin{split}
 \Theta_k&=  \sum_{j= 1}^{2p}\frac{- \cos j \phi}{|\sin j \phi|^3} + \sum_{j= 1}^{2p}\frac{\cos (k-1)j \phi}{|\sin j \phi|^3} =2\sum_{j=1}^p \frac{\cos j(k-1)\phi-\cos j\phi}{\sin^3 j\phi}, \ k=1, ..., n.
 \end{split}
 \end{equation}
 Obviously, $\Theta_2= \Theta_{2p+1}=0$, which reflects the  symmetry,  and
  \[  \Theta_1  =2\sum_{j=1}^p \frac{1-\cos j\phi}{\sin^3 j\phi} >0. \]
Note that $\rho_{k-1}$ is   the complex conjugation of $\rho_{n-(k-1)} =\rho_{n+2-k-1}$ and that  $\left[\frac{\partial^2 U}{\partial \th_i\partial\th_j} \right]$  is  a real matrix. We have 
 $ \Theta_{n+2-k} = \Theta_k. $
 Thus,  it is enough to study just the following eigenvalues
 \[  \Theta_2, \  \Theta_4, \  \Theta_6, \ ..., \  \Theta_{2p}.     \]

 Firstly, we claim that the sequence
 $ \{\Theta_4-\Theta_2,   \ \Theta_6-\Theta_4, \ ...,\ \Theta_{2p+2}-\Theta_{2p} \}$
 is  concave. By  formula \eqref{equ:trig} and other  elementary trigonometric identities, we have
 \begin{align*}
 &\Theta_{k+2}-\Theta_k=-4\sum_{j=1}^{p}\frac{\sin k  j \phi }{\sin^2 j\phi}, \ \Theta_{k+4}-2\Theta_{k+2}+ \Theta_k
 =-8\sum_{j=1}^{p}\frac{\cos (k+1)  j \phi }{\sin j\phi}, \\
 &\Theta_{k+6}-3\Theta_{k+4}+ 3\Theta_{k+2} -\Theta_k= 16\sum_{j=1}^{p} \sin (k+2)  j \phi\\
  & \ \ \ = 16\sum_{j=1}^{p}  \frac{\sin \frac{(k+2)(p+1)}{2}\phi \sin \frac{(k+2)p}{2}\phi   }{\sin \frac{k+2}{2}\phi }\\
   & \ \ \ = -8 \frac{\cos \frac{(k+2)n}{2}\phi-\cos \frac{k+2}{2}\phi  }{\sin \frac{k+2}{2}\phi } \\
  & \ \ \ =-8 \frac{ (-1)^k-\cos \frac{k+2}{2}\phi }{\sin \frac{(k+2)\pi}{n} }. \\
 \end{align*}
 Thus,  	$\Theta_{k+6}-3\Theta_{k+4}+ 3\Theta_{k+2} -\Theta_k<0$  if $k$ is even and    $k\in [2, 2p-2]$, i.e., the sequence $ \{\Theta_4-\Theta_2,   \ \Theta_6-\Theta_4, \ ...,\ \Theta_{2p+2}-\Theta_{2p} \}$  is concave.

 Secondly, note that the two ends of the sequence are positive.
 \begin{align*}
 & \Theta_{2p+2}-\Theta_{2p}=\Theta_1-\Theta_3  =  2\sum_{j=1}^p \frac{1-\cos 2 j\phi}{\sin^3 j\phi}>0,  \\
 &\Theta_4-\Theta_2=2\sum_{j=1}^p \frac{\cos  3j\phi-\cos  j\phi}{\sin^3 j\phi} = -8\sum_{j=1}^{p}\frac{\cos j\phi- \cos ^3j\phi }{\sin^3 j\phi } = -8\sum_{j=1}^{p}\frac{\cos j \phi}{\sin j\phi },
 \end{align*}
 which is positive according to  \eqref{equ:1}.

 Hence,  the sequence $\{\Theta_4-\Theta_2,   \ \Theta_6-\Theta_4, \ ...,\ \Theta_{2p+2}-\Theta_{2p}\} $  is positive, which implies
$  0= \Theta_2 <\Theta_4<...<\Theta_{2p}<\Theta_{2p+2}=\Theta_1. $ That is,  the eigenvalues of the block  $\left[\frac{\partial^2 U}{\partial \th_i\partial\th_j} \right]$ consist of two zeros, and $n-2$ positive numbers. The maximal eigenvalue is $ \Theta_1=2\sum_{j=1}^p \frac{1-\cos j\frac{2\pi}{n}}{\sin^3 j\frac{2\pi}{n}}$.
 \end{proof}

\subsection{Stability on $\S^1$}

 \begin{proof}[proof of Theorem \ref{thm:stas1}]
 	The equilibrium $X_\a$ is a local minimum of the kinetic energy $\sum_{i=2}^{n}\frac{v_i^2}{2\mathtt M_i}$.  In the coordinates $(u_2, ..., u_n,  g_n)$, the Hessian matrix of $U$ at $\bar \q$  is block diagonal since $\frac{\partial U}{\partial g_n}=0$, i.e, $D^2U =\diag\{ \left[\frac{\partial^2 U}{\partial u_i \partial u_j}\right], 0 \}$.  
 	 The signature of the Hessian matrix is $(n_0, n_+, n_-)=(1, 0, n-1)$ by Proposition \ref{prop:main}. So, the first block $\left[ \frac{\partial^2 U}{\partial u_i \partial u_j}\right]$ has $n-1$ negative eigenvalues. 
 	 Hence, the Hamiltonian $H_1= \sum_{i=2}^{n}\frac{v_i^2}{2\mathtt M_i} -U(u_2, ..., u_n)$ is positive definite at the  equilibrium $X_\alpha$, which implies that $X_\alpha$ is Lyapunov stable in the Hamiltonian system $(H_1, J_1^{-1}(n\a)/SO(2), \omega_1)$. 
\end{proof}

 \subsection{Stability on $\S^2$}


Recall that the reduced Hamiltonian is \[   H_2(u_2, ..., u_n, \vec{\th}, v_2, ..., v_n, \vec{p_\th})=F(\vec{\th}, v_2, ..., v_n)  +  \sum_{i=1}^n \frac{p^2_{\th_i}}{2m_i}   -U(u_2, ..., u_n, \vec{\th}), \  \]
where we denote by $F(\vec{\th}, v_2, ..., v_n)$ the function $\frac{1}{2} \vec{v}^T P\vec{v}$. Explicitly, it is
\[  F(\vec{\th}, v_2, ..., v_n)= \sum_{1\le i\le j\le n-1}v_{i+1}v_{j+1} P_{ij} +\sum_{i=1}^{n-1}v_{i+1} n\a P_{in} +\frac{n^2\a^2}{2} P_{nn},  \]
since $G_n=J_2(Y_\a)=n\a$, and the equilibrium is 
\[ Y_\a = ( \frac{2}{n}\pi,  ...,  \frac{k+1}{n}\pi, ..., \pi, \frac{\pi}{2},...,  \frac{\pi}{2},  0, ..., 0, 0, ..., 0 ), \a\in \R. \]
Linearizing  the flow at $Y_\a$ leads to 
 \[   L= \begin{bmatrix} \frac{\partial^2 H_2}{\partial v_i \partial u_j}&\frac{\partial^2 H_2}{\partial v_i \partial \th_j} &\frac{\partial^2 H_2}{\partial v_i \partial v_j}&\frac{\partial^2 H_2}{\partial v_i \partial p_{\th_j}}\\
\frac{\partial^2 H_2}{\partial  p_{\th_i}\partial u_j}&\frac{\partial^2 H_2}{\partial p_{\th_i} \partial \th_j} &\frac{\partial^2 H_2}{ \partial p_{\th_i}\partial v_j}&\frac{\partial^2 H_2}{\partial p_{\th_i} \partial p_{\th_j}}\\
 -\frac{\partial^2 H_2}{\partial u_i \partial u_j}&-\frac{\partial^2 H_2}{\partial u_i \partial \th_j}&-\frac{\partial^2 H_2}{\partial u_i \partial v_j}&-\frac{\partial^2 H_2}{\partial u_i \partial p_{\th_j}}\\
 -\frac{\partial^2 H_2}{\partial \th_i \partial u_j}&-\frac{\partial^2 H_2}{\partial \th_i \partial \th_j}&-\frac{\partial^2 H_2}{\partial \th_i \partial v_j}&-\frac{\partial^2 H_2}{\partial \th_i \partial p_{\th_j}}
 \end{bmatrix}  = 
 \begin{bmatrix}\O& \frac{\partial^2 F}{\partial v_i \partial \th_j}&\frac{\partial^2 F}{\partial v_i \partial v_j}&\O\\
 \O&\O &\O&M&\\
  \frac{\partial^2 U}{\partial u_i \partial u_j}&\frac{\partial^2 U}{\partial u_i \partial \th_j}&\O&\O\\
  \frac{\partial^2 U}{\partial \th_i \partial u_j}& \frac{\partial^2 U}{\partial \th_i \partial \th_j} -\frac{\partial^2 F}{\partial \th_i \partial\th_j}&- \frac{\partial^2 F}{\partial \th_i \partial v_j}&\O
 \end{bmatrix}, \]
  where $\O$ is the zero block.  Note  that  $\left[\frac{\partial^2 U}{\partial u_i \partial \th_j} \right]|_{Y_\a}=\O$.  Recall that $m_1=...=m_n=1$,  $\th_1=...=\th_n=\frac{\pi}{2}, v_2=...=v_n=0$ at $Y_\a$, and that the matrix $P$ depends on $\{\sin \th_1, ..., \sin \th_n\}$ only (see equations \eqref{equ:kinetic}).  At $Y_\a$,  we get $\frac{\partial^2 F}{\partial v_i \partial \th_j} =\cos \th_j (*)=0$, so $ \left[\frac{\partial^2 F}{\partial v_i \partial \th_j} \right]=\O$; we get  $ \frac{\partial^2 F}{\partial v_{i+1} \partial v_{j+1}}=P_{ij}, i,j <n$, and it is easy to check that $P_{ij}=0$ if $i\ne j$ and $P_{ii}=\frac{1}{\mathtt{M}_{i+1}}$, so $\left[\frac{\partial^2 F}{\partial v_i \partial v_j} \right]=\diag\{ \frac{1}{\mathtt M_2}, \frac{1}{\mathtt M_3}, ...,  \frac{1}{\mathtt M_n}\}$; we get $ \left[ \frac{\partial^2 F}{\partial \th_i \partial\th_j}\right]=\frac{n^2\a^2}{2}\left[\frac{\partial^2 P_{nn}}{\partial \th_i \partial\th_j}\right]=\a^2I_n$, since $P_{nn}=\frac{\sum \frac{1}{\sin^2 \th_i}}{n^2}$.  Thus, 
 \[ L|_{Y_\a}=  \begin{bmatrix}\O& \O &\tilde M_1&\O\\
 \O&\O &\O&I_n&\\
 \frac{\partial^2 U}{\partial u_i \partial u_j}&\O&\O&\O\\
 \O& \frac{\partial^2 U}{\partial \th_i \partial \th_j} -\a^2 I_n&\O&\O
 \end{bmatrix}, \ {\rm where} \ \tilde M_1=\diag\{ \frac{1}{\mathtt M_2}, \frac{1}{\mathtt M_3}, ...,  \frac{1}{\mathtt M_n}\}. \]

 \begin{proposition}\label{prop:evalue}
For a block matrix in the form of $  \begin{bmatrix}\O& \O & D&\O\\
\O&\O &\O&E&\\
K&\O&\O&\O\\
\O& Q&\O&\O
\end{bmatrix}, $  suppose that  $D$ and $E$ are invertible, $D, K$ (resp. $Q, E$)are of the same size. 
	If $\u$ is an eigenvector of $KD$  (resp. $QE$) with eigenvalue $\lambda \neq 0$, then there is a two-dimensional invariant subspace    on which  the matrix  is
similar to $\begin{bmatrix}
\sqrt{\lambda}&0\\0&-\sqrt{\lambda}
\end{bmatrix}$.
If $\u$ is an eigenvector of $KD$  (resp. $QE$) with eigenvalue  $0$, then there is a  two-dimensional  invariant subspace   on which  the matrix is similar to 
$\begin{bmatrix}
0&1\\0&0
\end{bmatrix}$.
 \end{proposition}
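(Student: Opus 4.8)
The plan is to exploit the sparse block pattern to reduce the whole $(2a+2b)\times(2a+2b)$ matrix to two independent off-diagonal $2\times 2$-type problems. Writing a vector as $(x_1,x_2,x_3,x_4)$ according to the four block columns, the matrix sends it to $(Dx_3,\,Ex_4,\,Kx_1,\,Qx_2)$. Hence the coordinate subspace $\{(x_1,\0,x_3,\0)\}$ is invariant, and on it the matrix acts as $\tilde M=\left[\begin{smallmatrix}0&D\\K&0\end{smallmatrix}\right]$ in the variables $(x_1,x_3)$; likewise $\{(\0,x_2,\0,x_4)\}$ is invariant with restriction $\left[\begin{smallmatrix}0&E\\Q&0\end{smallmatrix}\right]$ in $(x_2,x_4)$. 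Any invariant subspace of $\tilde M$ thus embeds as an invariant subspace of the full matrix with the same restriction, so it suffices to analyse $\tilde M$; the statement about $QE$ then follows verbatim after replacing $(D,K)$ by $(E,Q)$.

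First I would record the key identity $\tilde M^2=\diag(DK,\,KD)$, which explains why eigenvectors of $KD$ are the right objects: if $\tilde M(v_1,v_3)=\nu(v_1,v_3)$ then $Dv_3=\nu v_1$ and $Kv_1=\nu v_3$, whence $KDv_3=\nu^2v_3$. Conversely, given an eigenvector $\u\neq\0$ of $KD$ with $KD\u=\lambda\u$, I would form the subspace $W=\mathrm{span}\{(D\u,\0),\,(\0,\u)\}$ in the $(x_1,x_3)$ variables. Since $D$ is invertible we have $D\u\neq\0$, and the two generators live in complementary coordinate slots, so $\dim W=2$. A one-line check gives $\tilde M(\0,\u)=(D\u,\0)$ and $\tilde M(D\u,\0)=(\0,KD\u)=(\0,\lambda\u)$, so $W$ is invariant and, in the ordered basis $\{(D\u,\0),(\0,\u)\}$, the restriction of $\tilde M$ to $W$ is the companion-type matrix $\left[\begin{smallmatrix}0&1\\\lambda&0\end{smallmatrix}\right]$.

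It then remains only to read off the normal form from this $2\times 2$ block. When $\lambda\neq 0$ its characteristic polynomial $t^2-\lambda$ has the two distinct roots $\pm\sqrt\lambda$, so the block is diagonalizable and similar to $\diag(\sqrt\lambda,-\sqrt\lambda)$, as claimed. When $\lambda=0$ the block is already $\left[\begin{smallmatrix}0&1\\0&0\end{smallmatrix}\right]$, the desired nilpotent Jordan form, and $\dim W=2$ still holds because invertibility of $D$ keeps $D\u\neq\0$. The $QE$ case is identical, using invertibility of $E$ to guarantee $E\u\neq\0$ and $\dim W=2$.

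The argument is essentially bookkeeping, so I do not anticipate a genuine obstacle; the two points that require care are (i) verifying that the two generators of $W$ are genuinely independent, which is exactly where the invertibility hypotheses on $D$ and $E$ enter, and which is most delicate in the degenerate case $\lambda=0$, where one must not accidentally collapse $W$ to a line; and (ii) fixing a (possibly complex) square root $\sqrt\lambda$ so that the similarity to $\diag(\sqrt\lambda,-\sqrt\lambda)$ is literally correct rather than merely a statement about the spectrum.
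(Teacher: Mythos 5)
Your proof is correct and takes essentially the same route as the paper, which simply exhibits the explicit bases $\{(\pm D\u/\sqrt{\lambda}, \0, \u, \0)\}$ for $\lambda\neq 0$ and $\{(D\u,\0,\0,\0),(\0,\0,\u,\0)\}$ for $\lambda=0$ — the same two-dimensional subspace you construct, just written in the eigenvector basis rather than your companion-matrix basis.
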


  Actually,  if $ KD\u=\lambda \u$ and  $\lambda\ne 0$,  then the  basis of the  two-dimensional invariant subspace is $\{(\frac{D\u}{\sqrt{\lambda}}, \0,  \u,  \0),  (-\frac{D\u}{\sqrt{\lambda}}, \0,  \u,  \0)\}$. If $ KD\u=\0$, then the  basis of the  two-dimensional invariant subspace is $\{(D\u,\0,  \0,  \0),  (\0, \0,  \u,  \0)\}$.

   \begin{proof}[proof of Theorem \ref{thm:stas2}]


  By the proof of Theorem \ref{thm:stas1}, the matrix  $\left[\frac{\partial^2 U}{\partial u_i \partial u_j}\right]$ has $n-1$ negative eigenvalues. Note that $\tilde{M}_1$ is positive definite and diagonal, so
  \[  \left[ \frac{\partial^2 U}{\partial u_i \partial u_j} \right]\tilde{M}_1 = (\tilde{M}_1)^{-\frac{1}{2}}\tilde{M}_1 ^{\frac{1}{2}}\left[\frac{\partial^2 U}{\partial u_i \partial u_j}\right](\tilde{M}_1^{\frac{1}{2}})^T\tilde{M}_1^{\frac{1}{2}}. \]
  The above equality 
  and Proposition \ref{prop:evalue} implies that  there is a $(2n-2)$-dimensional invariant subspace of $L|_{Y_\a}$  on which $L|_{Y_\a}$ is semi-simple and  has only non-zero purely imaginary eigenvalues. 
  
 By Proposition  \ref{prop:main},  the  matrix  $ \left[\frac{\partial^2 U}{\partial \th_i \partial \th_j}\right] -\a^2 I_n$ has   eigenvalues:
 $\Theta_1-\a^2, \Theta_2-\a^2, ..., \Theta_n-\a^2. $
 Recall also that $ \Theta_1=2\sum_{j=1}^p \frac{1-\cos j\frac{2\pi}{n}}{\sin^3 j\frac{2\pi}{n}}>0$,  $\Theta_2= \Theta_{n}=0$ and $\Theta_k>0,  k\ne 1, 2, n$. 
 By Proposition \ref{prop:evalue} and the fact that $ \left[\frac{\partial^2 U}{\partial \th_i \partial \th_j}\right] -\a^2 I_n$  is symmetric,  we obtain the Jordan normal form of $L|_{Y_\a}$  on 
 the complementary $2n$-dimensional subspace,  
   \begin{align*}
   &\diag\Big\{\sqrt{\Theta_1},  -\sqrt{\Theta_1}, \begin{bmatrix}
   0&1\\0&0
   \end{bmatrix},   \sqrt{\Theta_3},  -\sqrt{\Theta_3}, ...,  \begin{bmatrix}
   0&1\\0&0
   \end{bmatrix}\Big\},  &{\rm  if \  }  \a^2 =0, \\
 &\diag\Big\{ \begin{bmatrix}
 0&1\\0&0
 \end{bmatrix},\sqrt{\Theta_2-\a^2},  -\sqrt{\Theta_2-\a^2}, 
 ..., \sqrt{\Theta_n-\a^2},  -\sqrt{\Theta_n-\a^2}\Big\},  &{\rm  if \  }  \a^2 =\Theta_1;\\
& \diag\Big\{ \sqrt{\Theta_1-\a^2},  -\sqrt{\Theta_1-\a^2}, 
..., \sqrt{\Theta_n-\a^2},  -\sqrt{\Theta_n-\a^2}\Big\}, & \ {\rm  if \  }  \a^2 \ne \Theta_1, 0. 
  \end{align*}
  This implies that $Y_\alpha$ is linearly unstable in the Hamiltonian system $(H_2, J_2^{-1}(n\a)/SO(2), \omega_2)$   if $\a^2 < \Theta_1=   2\sum_{j=1}^p \frac{1-\cos j\phi}{\sin^3 j\phi}$.

   On the other hand, the form of $L|_{Y_\a}$ implies that 
    \[ D^2 H_2|_{Y_\alpha} =  \diag\Big\{ -\left[ \frac{\partial^2 U}{\partial u_i \partial u_j}\right], \ \a^2 I_n-\left[ \frac{\partial^2 U}{\partial \th_i \partial \th_j}\right], \  \tilde M_1, \ I_n  \Big\}.  \]
    If $\a^2 > \Theta_1=   2\sum_{j=1}^p \frac{1-\cos j\phi}{\sin^3 j\phi}$, then $H_2$ is positive definite at the equilibrium $Y_\alpha$, which implies that $Y_\alpha$ is Lyapunov stable in the reduced Hamiltonian system. 
   
   \end{proof}

Consider the relative equilibria associated with  equilibrium configurations on $\S^1$ discussed in Remark \ref{rem:Nearequilibrium configurations}, i.e., those close to the regular polygonal ones.  Obviously, their stability depends on the two matrices, $ \left[\frac{\partial^2 U}{\partial \vp_i \partial\vp_j}\right]$, $ \left[\frac{\partial^2 U}{\partial \th_i \partial\th_j}\right]$.  By continuity, the eigenvalues of the two blocks  are close to that of the regular polygonal equilibrium configurations. 

\begin{corollary}
	Let $n=2p+1, p\ge 1$.  Let  $\q$ be an  equilibrium configuration on $\S^1$ sufficiently close to $\bar \q$. 
Let $Y_\a\in (H_2, J_2^{-1}(\sum m_i\a)/SO(2), \omega_2)$ be the equilibrium corresponding to the relative equilibrium $A_\a(t)\q$ of the unreduced system.  Then  the equilibrium $Y_\a$ of the reduced Hamiltonian system $(H_2, J_2^{-1}(\sum m_i\a)/SO(2), \omega_2)$ 
is linearly   unstable if  $\a ^2$ is smaller than a certain positive value, and  is Lyapunov stable if  $\a ^2$ is larger then that value. 
\end{corollary}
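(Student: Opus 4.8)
The plan is to show that the entire computation behind Theorem~\ref{thm:stas2} is stable under the perturbation $\bar\q\rightsquigarrow\q$, so that the same dichotomy, governed by the top eigenvalue of the $\th$-block, persists with a slightly moved threshold. First I would record the structural facts that hold for \emph{any} equilibrium configuration on $\S^1$, not merely the regular one. The argument in the proof of Proposition~\ref{prop:main} showing $\partial^2 U/\partial\th_i\partial\vp_j|_\q=\0$ uses only that reflecting a single $\th_i$ about $\pi/2$ preserves all mutual distances when $\q$ lies on $\S^1$; hence $D^2U(\q)$ is block diagonal for every equilibrium configuration on $\S^1$. Likewise the $SO(2)$-invariance $\partial U/\partial g_n=0$ forces $(1,\dots,1)$ into the kernel of the $\vp$-block, while the two infinitesimal tilts of $\S^1$ inside $\S^2$ (rotations about the $x$- and $y$-axes), which act purely on the $\th$ coordinates as $(\sin\vp_i)_i$ and $(\cos\vp_i)_i$, lie in the kernel of the $\th$-block because $\q$ is a critical point of the $SO(3)$-invariant potential $U$. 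Thus the $\vp$-block keeps exactly one zero mode and the $\th$-block keeps (at least) two, independently of the perturbation.

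Second, I would redo the linearization at $Y_\a$ allowing unequal masses $m_i$ close to $1$ and momentum $J_2=\sum m_i\a=G_n$. The point is that the kinetic matrix $P$ of \eqref{equ:kinetic}, evaluated at $\th_i=\pi/2$, $v_i=0$, is still diagonal with $P_{kk}=1/\mathtt M_{k+1}$ and $P_{nn}=1/\mu_n$, that $\partial^2 F/\partial v_i\partial\th_j=\O$ (each such entry carries a factor $\cos\th_j=0$), and that $\partial^2 F/\partial\th_i\partial\th_j=\tfrac{G_n^2}{2}[\partial^2 P_{nn}/\partial\th_i\partial\th_j]=\a^2\diag\{m_1,\dots,m_n\}$. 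Consequently $L|_{Y_\a}$ has exactly the block form used for Theorem~\ref{thm:stas2}, with $\tilde M_1$ in the top-right and $M=\diag\{1/m_i\}$ in the second row, so that the products relevant for Proposition~\ref{prop:evalue} become $KD=[\partial^2 U/\partial u_i\partial u_j]\,\tilde M_1$ and $QE=\big([\partial^2 U/\partial\th_i\partial\th_j]-\a^2\diag\{m_i\}\big)M$.

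Third, I would invoke continuity of eigenvalues together with Proposition~\ref{prop:evalue}. Since $QE$ is similar to the symmetric matrix $M^{1/2}[\partial^2 U/\partial\th_i\partial\th_j]M^{1/2}-\a^2 I_n$, its spectrum is real; by Proposition~\ref{prop:main} and continuity the eigenvalues of $M^{1/2}[\partial^2 U/\partial\th_i\partial\th_j]M^{1/2}$ lie near $\Theta_1,\Theta_3,\dots$ together with the two exact zeros, so the largest of them, call it $\hat\Theta_1$, stays strictly positive for $\q$ near $\bar\q$. Then the reasoning of Theorem~\ref{thm:stas2} carries over verbatim: if $\a^2<\hat\Theta_1$ the matrix $QE$ has a positive eigenvalue, producing a real pair $\pm\sqrt{\hat\Theta_1-\a^2}$ in the spectrum of $L|_{Y_\a}$ and hence linear instability; if $\a^2>\hat\Theta_1$ then $D^2H_2|_{Y_\a}=\diag\{-[\partial^2U/\partial u_i\partial u_j],\ \a^2\diag\{m_i\}-[\partial^2U/\partial\th_i\partial\th_j],\ \tilde M_1,\ M\}$ is positive definite — the first block because the $u$-block retains $n-1$ negative eigenvalues, the second block by the congruence with $M^{1/2}$ which reduces its positivity to $\a^2>\hat\Theta_1$ — giving Lyapunov stability. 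The announced positive threshold is $\hat\Theta_1$, which converges to $\Theta_1=2\sum_{j=1}^p\frac{1-\cos j\phi}{\sin^3 j\phi}$ as $\q\to\bar\q$.

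I expect the main obstacle to be the first step: one must be sure that the two degeneracies of Proposition~\ref{prop:main} are genuinely forced by symmetry rather than by the regularity of the polygon, so that they survive the perturbation \emph{exactly} (a crude continuity estimate would only keep them small, which is not enough to separate the stable and unstable regimes cleanly). Writing the tilt directions explicitly and verifying they lie in the $\th$-kernel, and confirming by continuity that the $u$-block keeps $n-1$ strictly negative eigenvalues and that $\hat\Theta_1$ stays positive, is the technical heart; the remainder is bookkeeping transported from the proof of Theorem~\ref{thm:stas2}.
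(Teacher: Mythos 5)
Your proposal is correct and follows essentially the same route as the paper, which justifies this corollary by a one-paragraph continuity argument: the eigenvalues of the two Hessian blocks vary continuously, so the dichotomy of Theorem \ref{thm:stas2} persists with threshold the (still positive) top eigenvalue of the mass-weighted $\th$-block. You supply details the paper leaves implicit — the persistence of the block-diagonal structure, the symmetry-forced zero modes, and the unequal-mass modifications to $P$ and to $\left[\partial^2 F/\partial\th_i\partial\th_j\right]$ — all of which check out (though the exact survival of the two $\th$-kernel directions, while true by the tilt-symmetry argument, is not actually needed for the stated dichotomy, since only the positivity of the largest eigenvalue and the negativity of the reduced $u$-block enter).
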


In the case of three bodies on $\S^1$, this  stability property holds for   all  equilibrium configurations, \cite{DSZ16}.  However, for $n\ge 5$, we are unable to  how to extend Proposition \ref{prop:main} 
to  all the $n$-body  equilibrium configurations on $\S^1$ for now. 

\begin{remark}\label{rem:bif}  
	Another interesting fact is that the relative equilibrium with the critical angular velocity $\a=\pm\sqrt{\Theta_1}$ is the intersection of two families  of relative equilibria for masses $m_1=...=m_n=1, n=2p+1$.  One family is those we have discussed in this paper, namely, those on $\S^1$,  $A_{\a, 0}\bar \q$, $\a \in \R$. 
	For each of the second family, the masses are equally distributed on the circle $x^2+y^2=\sin^2\th$, 
	 $ \th \in (0, \pi)$. Unlike the first family, the angular velocity is determined by $\th$. Actually, the corresponding configuration is a critical point of  $U+\frac{\a^2}{2}\sum_{i=1}^{n} x_i^2+y_i^2=U+\frac{\a^2}{2}\sum_{i=1}^{n} \sin^2 \th_i$, \cite{DSZ17}.   Thus, the angular velocity is (cf. equations \eqref{equ:matrix}), 
	\begin{align*}
\a^2(\th) &=- \frac{\partial U}{\partial \th_1}/ \sin \th_1 \cos \th_1   = \sum_{j=2}^n \frac{1}{\sin^3 d_{ij}} \frac{\sin \th_1 \cos \th_j - \sin \th_j \cos \th_1 \cos (j\frac{2\pi}{n} -\frac{2\pi}{n})}{\sin \th_1 \cos \th_1  }  \\
&= \sum_{j=2}^n \frac{1-\cos (j\frac{2\pi}{n} -\frac{2\pi}{n})}{\sin^3 d_{1j}}
	\end{align*}
As $\th\to \frac{\pi}{2}$, the circle approaches the equator, and the angular velocity approaches $ \pm\sqrt{\Theta_1}$ since $d_{1j} \to (j-1)\phi$.  Thus, the second family intersects the first family at $A_{\pm\sqrt{\Theta_1}, 0}(t)\bar \q$. 
In other words, there is a bifurcation going on. One can read Stoica \cite{Sto18} for more discussion on this bifurcation. 
\end{remark}



 \section{acknowledgments}
 The authors are deeply indebted to Juan Manuel S\'{a}nchez-Cerritos   and Cristina Stoica for suggesting the study of the stability problem of the regular polygonal configurations.   Shuqiang Zhu would like to thank  Florin Diacu for stimulating   interest in mathematics, for  his mentoring and  constant encouragement.   
Xiang Yu is supported by  NSFC(No.11701464) and the Fundamental Research Funds for the Central Universities (No.JBK1805001).  Shuqiang Zhu is supported by NSFC(No.11721101) and funds from China Scholarship Council (CSC NO. 201806345013).

\end{document}